\documentclass[12pt,leqno]{article}
\usepackage{lineno}
\usepackage{amsmath,amsthm}
\usepackage{txfonts}
\usepackage{bbm}
\usepackage{amssymb}
\usepackage{amsbsy}
\usepackage{mathrsfs}
\usepackage{graphicx}
 \usepackage{color,xcolor}

\allowdisplaybreaks
\theoremstyle{plain}
\theoremstyle{plain}
\theoremstyle{remark}  \newtheorem{remark}{\noindent\mbox{Remark}}
\theoremstyle{remark}  
\theoremstyle{definition}
\theoremstyle{plain}\newtheorem{lemma}{\noindent\mbox{Lemma}}
\theoremstyle{plain} \newtheorem{theorem}{\noindent\mbox{Theorem}}
\theoremstyle{plain}\newtheorem{proposition}{\noindent\mbox{Proposition}}
\theoremstyle{plain}
\theoremstyle{definition} \newtheorem{definition}{\noindent\mbox{Definition}}
\theoremstyle{definition}
\theoremstyle{plain} 
 \def\proof{\noindent{\it Proof.~~}}
 \def\qed{\hfill$\Box$\medskip}

\begin{document}
 \title{{  Biased Random Walk on $\mathbb Z_+$ with Traps of Linearly Increasing Depth}\footnote{Supported by Nature Science Foundation of Anhui Educational Committee, Anhui, China (Grant No.
2023AH040025).}}

\author{Hua-Ming \uppercase{Wang}$^{\dag,\ddag}$ and Ning  \uppercase{Wang}$^{\dag,\S}$  }
\date{}
\maketitle%
 \footnotetext[2]{School of Mathematics and Statistics, Anhui Normal University, Wuhu 241003, China}
\footnotetext[3]{Email: hmking@ahnu.edu.cn}
\footnotetext[4]{Email: 3027912956@qq.com}

\vspace{-.5cm}

\begin{center}
\begin{minipage}[c]{12cm}
\begin{center}\textbf{Abstract}\quad \end{center}
%

 We study a $\lambda$-biased random walk $(X_n)_{n\ge0}$ on the deterministic infinite rooted tree
$\mathcal{T}=\{(i,j): i\ge0,\,0\le j\le i\}$, whose backbone is $\{(i,0):i\ge0\}$ and, for each $i\ge1$, the segment $\{(i,j):1\le j\le i\}$ forms a \emph{trap} attached to $(i,0)$. The trapping effect induces long sojourns, yielding asymptotics markedly different from simple random walks.
The walk is recurrent for $\lambda\ge1$ and transient for $0<\lambda<1$. In the transient regime it is sub-ballistic: its distance from the root grows logarithmically, with
\[
\liminf_{n\to\infty}\frac{|X_n|}{\log n}=\frac{1}{\log(1/\lambda)},\quad
\limsup_{n\to\infty}\frac{|X_n|}{\log n}=\frac{2}{\log(1/\lambda)},\quad\text{a.s.}.
\]

A contrast between spatial and temporal regeneration emerges. Let $C(n)$ be the number of cutpoints among the first $n$ backbone vertices and $M(N)$ the number of cut times up to time $N$. Then
\[
\lim_{n\to\infty}\frac{C(n)}{n}= 1-\lambda,\qquad \lim_{N\to\infty}\frac{M(N)}{\log N}=\frac{1-\lambda}{\log(1/\lambda)},\quad\text{a.s.},
\]
so cutpoints have positive linear density while cut times grow only logarithmically.

\vspace{0.2cm}

\textbf{Keywords:}\ Random walk with traps, Sub-ballistic behavior, Cutpoints, Cut times

\vspace{0.2cm}

\textbf{MSC 2020:}\ 60G50, 60J10, 60F15
\end{minipage}
\end{center}

 \section{Introduction}
\subsection{Background and motivation}

For any transient random walk on $\mathbb{Z}$ with stationary independent increments, the law of large numbers forces a positive speed. However, when the walk is spatially inhomogeneous or placed in a random environment, new phenomena arise: it may be transient yet have zero speed. Such walks are commonly said to be sub-ballistic.  For random walks in random environments, Solomon \cite{Sol75} showed that sub-ballistic behavior does occur. For further studies of sub-ballistic random walks in random environments, we refer to Kesten, Kozlov and Spitzer \cite{kks75}, Sznitman \cite{Sznitman04}, Zeitouni \cite{Zeitouni04} and references therein.

In the spatially inhomogeneous setting, Harris \cite{Harris52} initiated the study of near-critical random walks on $\mathbb{Z}$, providing recurrence/transience criteria. Later, Lamperti \cite{Lamperti1960,lam62} extended the analysis to more general near-critical processes (not necessarily Markov chains), giving similar criteria as well as a central limit theorem. Problems concerning processes with asymptotically zero drift are often referred to as Lamperti's problem; see Denisov, Korshunov and Wachtel \cite{dkw25}, and Menshikov, Popov and Wade \cite{mpw17}. For nearest-neighbor spatially inhomogeneous random walks on $\mathbb{Z}_+$ where the drift at $i$ decays as $\frac{c}{i^\alpha}$ with $\alpha\in(0,1)$ and $c>0$, Voit \cite{v92} proved that the walk is sub-ballistic and grows like $c(1+\alpha)n^{1/(1+\alpha)}$. Additional examples of birth--death chains with zero speed can be found in Cs\'aki, F\"oldes and R\'ev\'esz \cite{Csaki2010} and James, Lyons and Peres \cite{James2008} et al. Finally, we recall that symmetric simple random walks on $\mathbb{Z}$ or $\mathbb{Z}^2$ are recurrent, whereas on $\mathbb{Z}^d$ ($d\ge 3$) they are transient but sub-ballistic (see, e.g., Durrett \cite[Chap. 5]{dur19}).

In this paper, we study a spatially homogeneous $\lambda$-biased random walk on $\mathbb{Z}_+$, where each vertex $k \geq 0$ is equipped with a trap of depth $k$. Although the transition mechanism is homogeneous, the walk spends substantial time exploring traps, leading to asymptotic behavior markedly different from that of simple random walks.
We show that in the transient regime ($\lambda < 1$), the walk is sub-ballistic and its position grows only logarithmically in time.

A concept intimately related to the speed is that of cutpoints and cut times, which reflect the spatial and temporal regeneration of the walk, respectively.

\begin{definition}\label{def:cut}
  Let $(X_n)_{n\ge0}$ be a Markov chain on a countable state space. For $n \ge 1$, if
  \begin{align*}
    \{X_0,\dots,X_{n-1}\}\cap \{X_n,X_{n+1},\dots\}=\emptyset,
  \end{align*}
  then $n$ is called a \emph{cut time} and $X_n$ is called a \emph{cutpoint} of the chain.
\end{definition}

These notions have been extensively studied. Consider simple random walk on $\mathbb{Z}^d$ and let $R_n$ be the number of cut times before time $n$. For $d\ge 5$, Erd\H{o}s and Taylor \cite{Erdos1960} proved that $R_n\sim c_d n$ a.s., while for $d=3,4$, Lawler \cite{Lawler1991,Lawler1996} showed that $\log R_n\sim c_d\log n$ almost surely, where $c_d$ are positive constants.

James and Peres \cite{James1996} proved that every transient random walk with bounded step size on an integer lattice has infinitely many cutpoints almost surely. They further showed that for any Markov chain that has infinitely many cutpoints almost surely, the eventual occupation numbers generate the exchangeable $\sigma$-field, partially resolving a conjecture of Diaconis and Freedman \cite{df}. However, James, Lyons and Peres \cite{James2008} gave an example of a transient birth--death chain that has only finitely many cutpoints. Cs\'aki, F\"oldes and R\'ev\'esz \cite{Csaki2010} provided a criterion for determining whether the number of cutpoints (or strong cutpoints) is finite or infinite. Later, Benjamini, Gurel-Gurevich and Schramm \cite{Benjamini2011} proved that every transient Markov chain has infinite expected number of cutpoints, but examples of chains with only finitely many cutpoints show that the expectation can be infinite while the actual number is finite. Recently, Halberstam and Hutchcroft \cite{Halberstam2023} gave a condition in terms of Green's function decay that guarantees infinitely many cut times for general Markov chains.

For more general adapted processes with bounded increments, Lo, Menshikov and Wade \cite{lmw} gave conditions to determine whether the number of cutpoints is finite or not, and further provided lower bounds for the expected number of cutpoints in $[0,n]$ for sufficiently large $n$. For a spatially inhomogeneous nearest-neighbor random walk on $\mathbb{Z}_+$, Wang and Wang \cite{ww25} showed that if the local drift at $n$ decays like $\frac{\Upsilon}{2n}$ with a constant $\Upsilon>1$, then the number of cutpoints in $[0,n]$, normalized by $\log n$, converges in distribution to a $\Gamma(\Upsilon-1,1)$ random variable.

In our $\lambda$-biased random walk on $\mathbb{Z}_+$ equipped with traps, the walk spends most of its time wandering inside the traps, so its advance along $\mathbb{Z}_+$ is extremely slow. Indeed, we show that the distance from the root grows as $\log n$. For cut times and cutpoints, our model exhibits novel limit behaviors. We prove that the number of cut times up to time $N$ grows only logarithmically, whereas the number of cutpoints among $\{0,1,\dots,n\}$ grows linearly with $n$. This reflects the fact that the walk rarely experiences genuine regeneration events because most of its time is consumed in the traps.

Before proceeding further, we introduce some notation used throughout the paper.
\paragraph{Notation.}
We write $a_n\sim b_n$ if $a_n/b_n\to1$ as $n\to\infty$. The notation $a_n=O(b_n)$ means that $|a_n|\le c b_n$ for some constant $c$ and all sufficiently large $n$; similarly, $a_n=o(b_n)$ indicates $\lim_{n\to\infty}a_n/b_n=0$. Regarding random variables $X$ and $Y,$ when writing $X\overset{d}{=}Y,$ we mean $X=Y$ in distribution. Throughout this work, constants depending only on $\lambda$ are denoted by $c_i$ and $K_i$ ($i=0,1,2,\dots$). The values of $c_i$ are fixed once and for all, whereas the values of $K_i$ may vary from one occurrence to another.  Furthermore, the indicator of an event $A$ is written as $\mathbf{1}_A$. Finally, for a set $S$, $|S|$ denotes its cardinality.

\subsection{Model and main results}
We construct an infinite rooted tree $\mathcal T$ as follows.
The vertex set is
\[
S = \{(i,j): i = 0,1,2,\dots,\; j = 0,1,\dots,i\}.
\]
We call $(0,0)$ the {\it root}.
The set $\{(i,0): i\ge 0\}$ is called the {\it backbone} of $\mathcal T$.
For each $i\ge 1$, the set $\{(i,j): 1\le j\le i\}$ is a {\it bush} attached to the backbone vertex $(i,0)$; the vertex $(i,i)$ is called a {\it leaf}.
The tree structure of $\mathcal T$ is given by the following parent-child relations:
\begin{itemize}
  \item For $i\ge 1$, the backbone vertex $(i,0)$ has parent $(i-1,0)$ and two children: the next backbone vertex $(i+1,0)$  and the first bush vertex $(i,1).$
  \item For $i\ge 2$ and $1\le j<i$, the bush vertex $(i,j)$ has parent $(i,j-1)$ and child $(i,j+1)$.
  \item For $i\ge1,$ the leaf $(i,i)$  has parent $(i,i-1)$ and no children.
\end{itemize}

Fix $\lambda>0$.  The $\lambda$-biased random walk $(X_n)_{n\ge0}$ on $\mathcal T$ is defined by the following transition probabilities: for any $n\ge 0,$
\begin{itemize}
  \item  $P\bigl(X_{n+1}=(1,0)\mid X_n=(0,0)\bigr)=1;$
  \item for any other vertex $x\ne (0,0)$ with $k\ge0$  children,
    \begin{gather*}
    P\bigl(X_{n+1}=\text{parent of }x\mid X_n=x\bigr)=\frac{\lambda}{k+\lambda},\\
        P\bigl(X_{n+1}=\text{each child of }x\mid X_n=x\bigr)=\frac{1}{k+\lambda},
    \end{gather*}
\end{itemize}
where for $k=0,$ since $x$ has no child, it should be understood that for the next step, the walk moves to its parent with probability one. In what follows, unless otherwise stated, we always assume that $X_0=(0,0).$

\begin{remark}
  The process $(X_n)_{n\ge0}$ can be interpreted as a random walk on the non-negative integers $\mathbb Z_+$ where each integer $i\ge1$ is equipped with a trap of depth $i$.  When the walk is at $i\ge1$, it falls into the trap with probability $\frac{1}{\lambda+2}.$ Once inside the trap, it may spend a long time before exiting.  Consequently, the walk may advance very slowly along $\mathbb Z_+$.
\end{remark}

Throughout, for a vertex $(i,j)$ in the tree, its distance from the root is defined by $|(i,j)|=i+j$.
Our first theorem determines when the walk is transient or recurrent, and proves that in the transient regime it has zero speed.

\begin{theorem}\label{thm:zero}
If $0<\lambda<1$, then $(X_n)_{n\ge0}$ is sub-ballistic, i.e., $$\lim_{n\to\infty}|X_n|= \infty\quad\text{but}\quad \lim_{n\to\infty}\frac{|X_n|}{n}=0, \quad\text{a.s..} $$
If $\lambda\ge 1,$ then
$$0= \liminf_{n\to\infty}|X_n|<\limsup_{n\to\infty}|X_n|=\infty,\quad\text{a.s.,}$$ that is, the walk is recurrent.
\end{theorem}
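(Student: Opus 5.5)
We reduce everything to the backbone walk. Let $(Y_k)_{k\ge0}$ be the sequence of backbone positions visited at successive \emph{backbone times}, i.e. we erase all excursions into the bushes. Every excursion from $(i,0)$ into its bush starts and ends at $(i,0)$. So $Y$ is a Markov chain on $\mathbb Z_+$: from $i\ge1$, conditioned on not entering the bush, it steps to $i+1$ with probability $\frac{1}{1+\lambda}$ and to $i-1$ with probability $\frac{\lambda}{1+\lambda}$, and from $0$ it steps to $1$. This is the classical reflected biased walk. It is transient to $+\infty$ iff $\lambda<1$, and recurrent iff $\lambda\ge1$. The bush excursions are a.s. finite, because each bush is a finite segment. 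Hence $X$ visits $(0,0)$ infinitely often exactly when $Y$ visits $0$ infinitely often.

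\textbf{Case $\lambda\ge1$.} Recurrence of $Y$ gives $\liminf|X_n|=0$ a.s. For $\limsup|X_n|=\infty$, note that $Y$ is irreducible and recurrent, so it visits every backbone site $i$ a.s., giving $|X_n|\ge i$ at some time. Alternatively, we can use the bushes: each visit to $(i,0)$ reaches the leaf $(i,i)$, at distance $2i$, with positive probability.

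\textbf{Case $\lambda<1$.} Transience: $Y_k\to\infty$ a.s. The bush of level $i$ is visited only during the finitely many visits of $Y$ to $i$, so for each $m$ the walk $X$ eventually stays in $\{(i,j):i>m\}$. Then $|X_n|\ge i>m$, and $|X_n|\to\infty$.

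\textbf{Zero speed.} Let $T_i$ be the hitting time of $(i,0)$. Since $|X_n|\le 2\max_{k\le n}\{\text{backbone level}\}$, it suffices to show $T_i/i\to\infty$ a.s.; then for $T_i\le n<T_{i+1}$ we get $|X_n|/n\le 2(i+1)/T_i\to0$. At each first visit to $(i,0)$, the walk enters the bush with probability $\frac1{2+\lambda}$. Inside the bush, the walk on the segment $\{(i,0),\dots,(i,i)\}$ has drift toward the leaf, because $\lambda<1$. Started from $(i,1)$, it reaches the leaf $(i,i)$ before returning to $(i,0)$ with probability bounded below by a constant $p=p(\lambda)>0$ (gambler's ruin with bias $1/\lambda>1$). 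Once at the leaf, the time to return to $(i,0)$ is that of a walk against drift over distance $i$. This time is at least of order $\lambda^{-(i-1)}$ with probability bounded below: by the gambler's ruin estimate, each return to near the leaf repeats with probability close to $1$, and escape has probability of order $\lambda^{i}$. Hence
\[
P\bigl(\text{the first bush excursion at level } i \text{ lasts} \ge c\lambda^{-i}\bigr)\ge c'>0.
\]
These events are independent across $i$ by the strong Markov property at the times $T_i$. By Borel--Cantelli (second lemma), infinitely many of them occur a.s. On such an $i$, we get $T_{i+1}\ge c\lambda^{-i}$, which is exponential in $i$. This alone only gives $T_{i+1}/(i+1)\to\infty$ along a subsequence. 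To get it for all $i$, use a sharper estimate: the probability that some excursion at level $i$ lasts $\ge \lambda^{-i/2}$ tends to $1$, because the escape probability from the leaf is $O(\lambda^{i})$. Since $T_i$ is nondecreasing in $i$ and dominates the excursion at level $i-1$, the bound $P\bigl(T_i< \lambda^{-i/2}\bigr)\to0$ combined with the monotonicity of $T_i$ suffices.

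\textbf{Main obstacle.} The main obstacle is turning this into an almost sure statement for all $i$. My plan is to show that $\sum_i P\bigl(T_{i}<i^2\bigr)<\infty$. This follows because the events $\{\text{the level-}k\text{ bush excursion at the first visit lasts} < i^2\}$ for $k\in[i/2,i]$ are independent. Each has probability at most $1-c''$, so the probability that all of them occur is at most $(1-c'')^{i/2}$, which is summable. Borel--Cantelli then gives $T_i\ge i^2$ eventually, and hence $|X_n|/n\to0$ a.s.
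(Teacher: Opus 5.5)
Your proof is correct. The recurrence/transience part matches the paper, and the zero-speed part takes a different, more elementary route.

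\textbf{Recurrence and transience.} You and the paper do essentially the same thing. The paper reads off $P_{m+1}(m,\infty,+)=1-\lambda$ or $0$ from the gambler's-ruin formula, which is exactly your embedded backbone walk. Your handling of $\liminf|X_n|$, $\limsup|X_n|$ and of $|X_n|\to\infty$ is, if anything, more explicit than the paper's.

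\textbf{Zero speed.} The paper works with the full crossing times $S_k-S_{k-1}$. Note that your $T_i$, the hitting time of $(i,0)$, is the paper's $S_i$. The paper proceeds as follows:
\begin{itemize}
\item it computes $E[S_k-S_{k-1}]$ exactly;
\item it bounds the second moment by $c_2\lambda^{-2k}$, via the second-moment bound for bush return times in Lemma~\ref{lem:bushre};
\item Paley--Zygmund then gives $P(S_k-S_{k-1}\ge c_3\lambda^{-k})\ge\delta$;
\item Kolmogorov's SLLN on these indicators shows that at least $\delta n/2$ of the levels $k\le n$ are ``long'', so $S_n\ge c_3\lambda^{-\delta n/2}$.
\end{itemize}
You instead obtain a lower-tail bound directly, only for the first-visit bush excursion at each level. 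It comes from the geometric number of returns to the leaf, whose escape probability is $\frac{(1-\lambda)\lambda^{i-1}}{1-\lambda^{i}}\le\lambda^{i-1}$. You then replace the SLLN by a block Borel--Cantelli argument over the levels in $[i/2,i)$. This is the same device the paper uses later for the lower bound in Lemma~\ref{lem:logSn}.

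Your route needs no moment computations at all. The paper's route is economical within the paper, because Proposition~\ref{prop:tk} is needed anyway for the sharp asymptotics of $\log S_n/n$. If you use the threshold $c\lambda^{-i/2}$ in place of $i^2$, your block argument also gives exponential growth of the hitting times, matching the paper's conclusion.

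\textbf{Two small repairs.}

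First, delete the paragraph just before ``Main obstacle''. The probability that some excursion at level $i$ lasts at least $\lambda^{-i/2}$ does not tend to $1$. With probability at least $\frac{1-\lambda}{2+\lambda}$ the walk steps from $(i,0)$ to $(i+1,0)$ and never returns, so it never enters the level-$i$ bush. Moreover, convergence in probability plus monotonicity of the hitting times does not yield an almost sure statement without a summable rate. That rate is exactly what your block argument supplies.

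Second, in the block argument take $k\in[i/2,\,i-1]$ rather than $k\in[i/2,\,i]$. The first-visit excursion at level $i$ begins at the hitting time of $(i,0)$, so that hitting time does not dominate it.

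With these changes the proof is complete.
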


Next, we consider the number of backbone vertices visited by time $n.$ For $k\ge 0$ let $$ S_k=\min\{n\ge 0: X_n=(k,0)\}$$ be the first time the walk hits the backbone vertex $(k,0)$.
For $n\ge 0,$ define
\[
N_n = \max\{k \ge 1 :  S_k \le n\},
\] which records the number of backbone vertices visited up to time $n.$
The next proposition shows that $N_n$  grows logarithmically.

\begin{proposition}\label{thm:logscale} For $0<\lambda\ne 1,$ we have
\begin{align*}
\lim_{n\to\infty}\frac{N_n}{\log n}
=\frac{1}{\log(\lambda\vee\lambda^{-1})},
\quad\text{a.s.}.
\end{align*}
\end{proposition}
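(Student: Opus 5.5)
Since $N_n\ge k$ iff $S_k\le n$, the statement is equivalent to $\log S_k/k\to \log(\lambda\vee\lambda^{-1})$ a.s. The plan is to decompose $S_k=\sum_{i=1}^{k}(S_i-S_{i-1})$ and to show that the increments have exponential size of the right order. On one side, the sum is at least its largest term. On the other, it is at most $k$ times the largest term, and $\log k=o(k)$. So it suffices to show that the maximal increment among the first $k$ is $e^{k\log(\lambda\vee\lambda^{-1})+o(k)}$ a.s.

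\textbf{Case $\lambda<1$.} Here the dominant cost is trap excursions. The increment $S_{i+1}-S_i$ consists of excursions started from $(i,0)$ and its ancestors; each excursion either goes into a bush $(j,\cdot)$ with $j\le i$, or goes back along the backbone. I will compute the expected number of visits to the leaf $(j,j)$ before $S_{i+1}$ by gambler's-ruin/electrical-network arguments. Along a bush the walk has drift toward the leaf, since it steps to the parent with probability $\lambda/(1+\lambda)$. A trap of depth $j$ is therefore exited only after about $\lambda^{-j}$ returns to the leaf. Each excursion into the trap reaches the leaf with probability bounded below, and the number of entries to traps near $(i,0)$ before going forward is geometric with bounded mean. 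Back-excursions along the backbone to level $j$ have probability about $\lambda^{i-j}$ and cost about $\lambda^{-j}$, which contributes a total of about $i$. This gives $E[S_{i+1}-S_i]\le K_1 i\lambda^{-i}$. Summing, $E S_k\le K\,k\lambda^{-k}$, and Markov's inequality with Borel--Cantelli (applied to $P(S_k>k^3\lambda^{-k})\le K/k^2$) gives $\limsup \log S_k/k\le\log(1/\lambda)$.

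For the lower bound, with probability at least $c>0$ (say $1/(\lambda+2)$ times a constant) the walk enters the trap at $(k-1,0)$ and reaches its leaf. The time spent before climbing out is then stochastically at least a geometric variable with success probability about $\lambda^{k}$, so for fixed $\varepsilon$,
\[
P\bigl(S_k-S_{k-1}<\lambda^{-(1-\varepsilon)k}\bigr)\le 1-c+O(\lambda^{\varepsilon k}).
\]
This alone is not summable. So I will use the independence of trap visits at the distinct sites $k-m,\dots,k-1$ with $m=\varepsilon k$: by the strong Markov property, the trials "enter the trap and reach the leaf before first hitting the next backbone vertex" are independent across these sites. Then
\[
P\bigl(\text{no trial succeeds}\bigr)\le (1-c)^{\varepsilon k},
\]
which is summable. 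A success at site $k-m$ gives $S_k\ge\lambda^{-(1-2\varepsilon)k}$ with error probability $\lambda^{\varepsilon k}$, also summable. Borel--Cantelli then yields $\liminf\log S_k/k\ge\log(1/\lambda)$.

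\textbf{Case $\lambda>1$.} Now the bushes have drift toward their base, so trap excursions cost $O(j)$ time at most. The dominant cost is the backbone itself, which is a walk with drift toward the root. By a gambler's-ruin computation, the number of returns to $(0,0)$ before reaching $(k,0)$ is geometric with mean comparable to $\lambda^{k}$; this gives the lower bound on $S_k$ via the time spent on these returns. Every excursion from the root is short on average, with mean $O(1)$ up to polynomial corrections, since traps of depth $j$ are entered only with probability $\lambda^{-j}$ at distance $j$ and are cheap. This gives $E S_k\le K k^2\lambda^k$. The upper bound then follows from Markov's inequality and Borel--Cantelli as in the first case.

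For the lower bound, the probability that the walk reaches level $k$ within fewer than $\lambda^{(1-\varepsilon)k}$ excursions from the root is at most $\lambda^{(1-\varepsilon)k}\cdot K\lambda^{-k}$, which is summable, so Borel--Cantelli applies.

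The main obstacle is the lower bound in the case $\lambda<1$. A single deep trap is visited only with probability bounded away from $1$, so one has to combine many nearby traps via the strong Markov property. One also needs a clean leaf-return computation showing that the exit time from a trap of depth $j$ is at least $\lambda^{-(1-\varepsilon)j}$ with probability $1-O(\lambda^{\varepsilon j})$.
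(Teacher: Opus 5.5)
Your proof is correct in outline, but it takes a different route from the paper.

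\textbf{The paper's route.} It handles both regimes at once through the increments $T_k$. Write $\mu=\lambda\vee\lambda^{-1}$. The paper computes $E[T_k]$ exactly from the recursion $a_k=2+\lambda+\lambda a_{k-1}+E[R_{k-1}]$. It proves $E[T_k^2]\le c_2\mu^{2k}$ by induction, using the second-moment part of Lemma~\ref{lem:bushre}. Paley--Zygmund then gives the uniform bound $P(T_k\ge c_3\mu^k)\ge\delta$. The upper bound is Chebyshev applied to $V_n=\mu^{-n}S_n$. The lower bound shows that the last $k\le n$ with $T_k\ge c_3\mu^k$ lies within $\alpha\log n$ of $n$.

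\textbf{Your route.} You name the slowing mechanism in each regime and get the lower tail directly. For $\lambda<1$ it is the geometric number of leaf returns in a deep trap, with success probability $\asymp\lambda^j$, combined with independent trials over a window of $\varepsilon k$ sites. For $\lambda>1$ it is the geometric number (mean $\asymp\lambda^k$) of root excursions before the walk reaches level $k$.

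\textbf{What each buys.} Your approach avoids every second-moment computation. Your upper bounds need only first moments, thanks to the $k^3$ slack in Markov's inequality. The price is a case split and a weaker conclusion: $\log S_k=k\log\mu+o(k)$, where the paper gets an error of $O(\log k)$. The weaker form is all the proposition needs.

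\textbf{Two steps to write out.} First, your heuristic for $E[S_{i+1}-S_i]$ (``contributes a total of about $i$'') is loose. The clean version: before $S_{i+1}$, the expected number of visits to $(j,0)$ is $O(\lambda^{i-j})$, and each trap entry there costs $E[R_j]=O(\lambda^{-j})$. Summing gives $O(\lambda^{i}\sum_{j\le i}\lambda^{-2j})=O(\lambda^{-i})$, which is even better than your $K_1 i\lambda^{-i}$. Second, for $\lambda>1$ the bound on $E[S_k]$ should invoke Wald's identity for the root excursions truncated at level $k$. This applies because the index of the first excursion reaching $(k,0)$ is a stopping time for that i.i.d.\ sequence.
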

For $n\ge 0,$ write $X_n = (X_{n,1}, X_{n,2})$, where $X_{n,1} \in \mathbb{Z}$ is current  backbone coordinate and $X_{n,2}$ is the depth in the current bush. Clearly, $0\le X_{n,2}\le X_{n,1}.$
Based on Proposition~\ref{thm:logscale}, we obtain the following asymptotic growth rates for the walk.
\begin{theorem}\label{cor:limsup}

If $0<\lambda<1$, then \begin{align}\label{eq:limb}
  \lim_{n\to\infty}\frac{X_{n,1}}{\log n}=\frac{1}{\log(1/\lambda)},\quad \text{ a.s.,}
\end{align}
and
\begin{align}\label{eq:lslim}
\liminf_{n\to\infty}\frac{|X_n|}{\log n}=\frac{1}{\log(1/\lambda)}, \quad
\limsup_{n\to\infty} \frac{|X_n|}{\log n} = \frac{2}{\log(1/\lambda)}, \quad \text{a.s.}.\end{align}

If $\lambda>1$, then
\[
\limsup_{n\to\infty}\frac{X_{n,1}}{\log n}=\frac{1}{\log\lambda},\quad\text{a.s.}.\]
\end{theorem}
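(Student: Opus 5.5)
Throughout, $0<\lambda<1$ until the last paragraph, and we write $c=1/\log(1/\lambda)$.

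\textbf{Backbone coordinate.} Proposition~\ref{thm:logscale} gives $N_n/\log n\to c$ a.s. Since $X_{n,1}\le N_n$, the upper bound $\limsup X_{n,1}/\log n\le c$ is immediate. For the lower bound we need that the walk does not backtrack far along the backbone. Let $L_k$ be the minimal backbone level visited after time $S_k$. The event that after hitting $(k,0)$ the walk returns to $(k-m,0)$ is a gambler's-ruin event for the $\lambda$-biased backbone walk, because the traps are finite and are exited a.s. Its probability is therefore at most $\lambda^m$. Take $m=\lceil \epsilon\log k\rceil$ with $\epsilon$ large enough that $\sum_k\lambda^{m}<\infty$, i.e.\ $\epsilon\log(1/\lambda)>1$. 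Borel--Cantelli then gives $L_k\ge k-O(\log k)$ eventually. For $S_k\le n<S_{k+1}$ we have $k=N_n$, so
\[
X_{n,1}\ge N_n-O(\log N_n)=N_n-O(\log\log n).
\]
This proves \eqref{eq:limb}.

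\textbf{$\liminf$ of $|X_n|$.} Since $|X_n|\ge X_{n,1}$, we get $\liminf_n |X_n|/\log n\ge c$. Equality holds along the sequence $n=S_k$, where $|X_{S_k}|=k$ and $S_k$ satisfies $\log S_k\sim k/c$ by Proposition~\ref{thm:logscale}.

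\textbf{$\limsup$ of $|X_n|$.} The upper bound is $|X_n|\le 2X_{n,1}$ together with \eqref{eq:limb}. For the lower bound we must show that infinitely often the walk reaches the leaf of its current trap; then $|X_n|=2X_{n,1}$ at those times, giving $\limsup\ge 2c$. An excursion into the bush at level $i$ is a $\lambda$-biased walk on a segment of length $i$ with drift toward the leaf. Started from $(i,1)$, it reaches $(i,i)$ before returning to $(i,0)$ with probability
\[
\frac{1-\lambda}{1-\lambda^{i}}\ge 1-\lambda.
\]
Each backbone level $k$ is visited and the trap there is entered with probability $1/(\lambda+2)$ from the first visit. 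By the strong Markov property at the times $S_k$, the events ``on the first visit to $(k,0)$ the walk next steps into the bush and reaches the leaf $(k,k)$'' are independent across $k$, each with probability at least $(1-\lambda)/(\lambda+2)$. By Borel--Cantelli II, infinitely many of them occur. At such a time $n$ we have $X_{n,1}=k$ with $n\ge S_k$, and \eqref{eq:limb} gives $k/\log n\to c$ along these times. This step is the main point, but it is easy once the traps are seen to be drift-to-the-leaf gambler's-ruin chains.

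\textbf{Case $\lambda>1$.} As before $X_{n,1}\le N_n$. Proposition~\ref{thm:logscale} with $\lambda>1$ gives $\limsup X_{n,1}/\log n\le 1/\log\lambda$. Along $n=S_k$ we have $X_{n,1}=N_n=k$, which gives equality.
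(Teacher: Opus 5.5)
Your proof is correct, and most of it follows the paper. Your $L_k$ is a whole-future version of the paper's $D_k$ (the maximal backtrack before $S_{k+1}$), with the same gambler's-ruin tail $\lambda^m$ and the same Borel--Cantelli step. The $\liminf$ is attained along $n=S_k$ in both proofs, and your events for the $\limsup$ are exactly the paper's $A_k$. The case $\lambda>1$ is also identical: reading $\log S_k\sim k\log(\lambda\vee\lambda^{-1})$ off Proposition~\ref{thm:logscale} via $N_{S_k}=k$ is equivalent to citing Lemma~\ref{lem:logSn}.

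The genuine difference is the lower bound $\limsup_n |X_n|/\log n\ge 2/\log(1/\lambda)$. The paper proves the $\limsup$ before \eqref{eq:limb}, so it must control \emph{when} the leaf $(k,k)$ is reached. It uses Lemma~\ref{lem:bush_time} (a Doob $h$-transform plus a coupling with a biased walk) to get $m_k-S_k=O(k)$, hence $\log m_k\sim\log S_k$. You prove \eqref{eq:limb} first. Because it controls $X_{n,1}/\log n$ along \emph{all} times, at any time the walk sits at a leaf you have $|X_n|=2X_{n,1}$, and $2X_{n,1}/\log n\to 2/\log(1/\lambda)$ follows directly. You need no information about how long the dive into the trap takes.

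This makes Lemma~\ref{lem:bush_time} unnecessary for the theorem, which is a real saving. The paper's argument for the almost-sure part of that lemma applies Kolmogorov's strong law to $\xi_2,\dots,\xi_k$, whose laws depend on $k$, so it needs extra care. (Any polynomial bound on $m_k-S_k$ would already suffice there, e.g.\ from $E[H_k]\le c_4k$, Markov's inequality and Borel--Cantelli.) The paper's route buys only the extra fact that a successful dive takes $O(k)$ steps, which the statement does not require.
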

\begin{remark}
  For $\lambda>1$, in the traps, the walk has a drift $\frac{\lambda-1}{\lambda+1}$ towards the backbone. Consequently, whenever it enters a trap, it is unlikely to penetrate deeply. Based on this observation, we conjecture that
  \[
  \limsup_{n\to\infty}\frac{|X_n|}{\log n} = \frac{1}{\log\lambda},\quad\text{a.s.},
  \]
  but we do not yet have a proof.
\end{remark}

Next we study the number of cutpoints and cut times of $(X_n)_{n\ge0}$. Note that for $i\ge j\ge 1$, after reaching a bush vertex $(i,j)$ the walk must return to the backbone vertex $(i,0)$; hence none of the bush vertices are cutpoints.
For $n, N \ge 0$, define
\begin{gather*}
C(n) = \bigl| \{ 0 < k \le n : (k,0) \text{ is a cutpoint} \} \bigr|, \\
M(N) = \bigl| \{ 0 < t \le N : t \text{ is a cut time} \} \bigr|,
\end{gather*}
which count, respectively, the number of cutpoints among the backbone vertices $(1,0),\dots,(n,0)$ and the number of cut times up to time $N$.

The following theorem highlights a striking contrast between the asymptotic behaviors of cutpoints and cut times.

\begin{theorem}\label{thm:cutpoints}
For $0<\lambda<1,$  we have
\[
\lim_{n\to\infty}\frac{C(n)}{n}=1-\lambda \quad \text{and}\quad \lim_{N\to\infty}\frac{M(N)}{\log N}=\frac{1-\lambda}{\log(1/\lambda)},\quad\text{a.s.}.
\]
\end{theorem}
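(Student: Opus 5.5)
The plan is to reduce everything to the backbone skeleton of the walk. Trap excursions do not affect which backbone vertices are cutpoints, and cut times are exactly the first hitting times of cutpoints. Once $C(n)/n\to 1-\lambda$ is established, the second claim will follow from Proposition~\ref{thm:logscale}.

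\emph{Step 1: the skeleton walk.} Let $(Y_m)_{m\ge0}$ be the sequence of backbone coordinates recorded each time the walk makes a backbone step $(i,0)\to(i\pm1,0)$, with excursions into bushes deleted. From $(i,0)$, $i\ge1$, the walk steps to $(i+1,0)$ with probability $\frac1{2+\lambda}$ and to $(i-1,0)$ with probability $\frac\lambda{2+\lambda}$. Entering the bush returns it to $(i,0)$ almost surely, since the bush is finite. Hence $Y$ is a nearest-neighbour walk on $\mathbb Z_+$, reflected at $0$, with up-probability $p=\frac1{1+\lambda}$ and down-probability $q=\frac\lambda{1+\lambda}$; it is transient for $\lambda<1$.

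Bush vertices are never cutpoints. A backbone vertex $(k,0)$ is a cutpoint iff the walk never visits $(k-1,0)$ after $S_k$. Indeed, before $S_k$ it has visited only vertices $(i,j)$ with $i\le k-1$, and every such vertex can only be reached again through $(k-1,0)$. So $(k,0)$ is a cutpoint iff $\xi_k:=\mathbf 1\{Y \text{ never visits } k-1 \text{ after first hitting } k\}=1$. By the strong Markov property and gambler's ruin, $P(\xi_k=1)=1-q/p=1-\lambda$ for every $k\ge1$.

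\emph{Step 2: renewal structure for $C(n)$.} Let $\kappa_1<\kappa_2<\cdots$ be the positions of the cutpoints; there are infinitely many because $Y$ is transient with positive drift. The aim is to show that the gaps $\kappa_{r+1}-\kappa_r$, $r\ge1$, are i.i.d. with finite mean. Given $\kappa_r=k$, the path of $Y$ after $S_k$ is a $p$-biased walk started from $k$ and conditioned never to visit $k-1$. By the strong Markov property at $S_k$ together with spatial homogeneity above the barrier, this conditioned path is a translate of the same object started at $1$ with barrier $0$. It is also independent of the path before $S_k$, because the event $\{\kappa_r=k\}$ factorises into a condition on the pre-$S_k$ path and the event $\{\xi_k=1\}$.

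I expect this step to be the main obstacle. The subtle point is that $\{\kappa_r=k\}$ also involves $\xi_j$ for $j<k$, and these depend on the future after $S_k$. That dependence only forces the future not to go below $k-1$, which is already implied by $\xi_k=1$, so the factorisation should go through. I would write it out carefully via the ``ladder'' description: $\kappa_{r+1}$ is the first level $l>k$ such that the walk after $S_l$ never returns to $l-1$.

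For integrability, the gap exceeds $m$ only if each of roughly $m/2$ successive escape attempts fails. Each attempt succeeds with probability at least $1-\lambda$, and every failure is followed by a finite backtrack. This gives a geometric tail and hence a finite mean. The renewal strong law then gives $C(n)/n\to 1/E[\kappa_2-\kappa_1]$ almost surely. By the renewal theorem $P(\xi_k=1)\to 1/E[\text{gap}]$, and since $P(\xi_k=1)\equiv1-\lambda$, the limit is $1-\lambda$.

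\emph{Step 3: cut times.} A time $t$ is a cut time iff $X_t=(k,0)$ with $t=S_k$ and $(k,0)$ a cutpoint. At any other visit time $X_t$ has been visited before, and a cut time at a bush vertex is impossible since the walk must return through the bush to $(i,0)$. Every $k\le N_N$ satisfies $S_k\le N$, while $S_k>N$ for $k>N_N$. Hence $M(N)=C(N_N)$. Since $N_N\to\infty$, Step 2 and Proposition~\ref{thm:logscale} give
\[
\frac{M(N)}{\log N}=\frac{C(N_N)}{N_N}\cdot\frac{N_N}{\log N}\longrightarrow\frac{1-\lambda}{\log(1/\lambda)}\quad\text{a.s.}
\]
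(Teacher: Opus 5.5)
Your proposal is correct, and for the first limit it takes a genuinely different route from the paper. Your Step~3, the identity $M(N)=C(N_N)$ combined with Proposition~\ref{thm:logscale}, is exactly what the paper does.

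For $C(n)/n$ the paper does not use regeneration. It computes the full multi-point law
\[
P(\eta_{i_1}=\cdots=\eta_{i_k}=1)=(1-\lambda)^k\prod_{s=1}^{k-1}\bigl(1-\lambda^{i_{s+1}-i_s+1}\bigr)^{-1}.
\]
Since this depends only on the gaps, the indicator sequence is strictly stationary. Birkhoff's ergodic theorem then gives almost sure convergence, and an $L^2$ estimate using the exponentially decaying covariances $\operatorname{Cov}(\eta_i,\eta_j)\le\lambda^{|j-i|+1}$ identifies the limit as $1-\lambda$.

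Your regeneration argument needs only the one-point probability $1-\lambda$ and the factorisation $\{\kappa_r=k\}=G\cap\{\xi_k=1\}$ with $G\in\mathcal F_{S_k}$. You justify that factorisation correctly: on $\{\xi_k=1\}$, every $\xi_j$ with $j<k$ is determined by the path up to $S_k$. Your route also exposes more structure, namely i.i.d. gaps between cutpoints. The price is that you must verify the renewal hypotheses, which the paper's explicit computation sidesteps.

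Two of those verifications are loose as written.

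First, the claim that the gap exceeds $m$ only if ``roughly $m/2$ successive escape attempts'' fail is not right. A single failed attempt at level $l$ can climb arbitrarily high before returning to $l-1$, so one failure may use up many levels. The correct picture is as follows. Fresh attempts are made at new record levels after each return. Each succeeds with probability $1-\lambda$, independently of the past, by the strong Markov property. Each failure uses up a geometric-tailed number of levels, since reaching $l+d$ and then returning to $l-1$ has probability at most $\lambda^{d+1}$. What you really need from this is that $\kappa_1<\infty$ and that all gaps are finite almost surely, i.e., that there are infinitely many cutpoints. ``Transient with positive drift'' alone is not a proof of that.

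Second, the renewal theorem $P(\xi_k=1)\to1/\mu$ requires aperiodicity. This does hold, because $P(\kappa_{r+1}-\kappa_r=1)=\frac{1-\lambda}{1-\lambda^2}=\frac1{1+\lambda}>0$. But you can avoid both aperiodicity and the integrability estimate:
\begin{itemize}
\item Once $\kappa_1$ and the gaps are a.s. finite, the renewal strong law gives $C(n)/n\to1/\mu\in[0,1]$ a.s., even when $\mu=\infty$.
\item Since $0\le C(n)/n\le1$ and $E[C(n)]=(1-\lambda)n$, bounded convergence forces $1/\mu=1-\lambda$.
\end{itemize}
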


This result shows that while cutpoints occur with a positive linear density, cut times are much rarer, growing only logarithmically. This sharp difference arises because the walk spends most of its time wandering in the bushes, so genuine regeneration events (cut times) happen only when the walk moves along the backbone.

\paragraph{Outline of the paper.}
The paper is organized as follows. Section \ref{sec:B} first collects auxiliary estimates used throughout the paper. Section \ref{sec:C} then studies recurrence, transience and zero speed, giving the proof of Theorem~\ref{thm:zero}. Next, Section \ref{sec:D} establishes the logarithmic scaling of backbone visits and displacement, proving Proposition~\ref{thm:logscale} and Theorem~\ref{cor:limsup}. Finally, Section \ref{sec:E} analyzes cutpoints and cut times, providing the proof of Theorem~\ref{thm:cutpoints}.

\section{Auxiliary results: exit probabilities, bush return times and hitting times}\label{sec:B}

This section collects essential estimates for later proofs.
First, we compute exit probabilities (gambler's ruin) for the walk on the backbone.
Second, we analyze the time spent inside a bush before returning to the backbone.
Finally, we study the travel times $T_k$ between consecutive backbone vertices $(k-1,0)$ and $(k,0)$, which will be crucial for establishing sub-ballistic behavior and logarithmic scaling.

\subsection{Exit probability from an interval}

For integers $0\le a\le m\le b$, let $P_m(a,b,+)$ denote the probability that starting from $(m,0)$ the walk hits $(b,0)$ before $(a,0)$.
The following lemma gives the formula of $P_m(a,b,+)$ in term of the parameter $\lambda.$
\begin{lemma}
  For $0\le a\le m\le b$,
  \begin{equation}\label{eq:pmab}
P_m(a,b,+) =
\begin{cases}
\dfrac{1 - \lambda^{m-a}}{1 - \lambda^{b-a}}, & \text{if } \lambda \neq 1,\\[10pt]
\dfrac{m-a}{b-a}, & \text{if } \lambda = 1.
\end{cases}
\end{equation}
\end{lemma}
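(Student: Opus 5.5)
The plan is to reduce the walk to a nearest-neighbour chain on the backbone by watching it only at backbone vertices. The key point is that every excursion into a bush must come back to the backbone vertex it left from. We record the sequence of distinct successive backbone positions, removing the bush excursions and the consecutive repeats they produce. This gives the embedded chain $(Y_n)$ on $\{0,1,2,\dots\}$.

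First we compute its transition law. Fix $1\le m$, so that $(m,0)$ has two children. At each step from $(m,0)$ the walk goes
\begin{itemize}
\item to $(m-1,0)$ with probability $\frac{\lambda}{2+\lambda}$,
\item to $(m+1,0)$ with probability $\frac{1}{2+\lambda}$,
\item into the bush with probability $\frac{1}{2+\lambda}$.
\end{itemize}
A bush excursion returns to $(m,0)$ almost surely, since the bush is finite. So by the strong Markov property, the first backbone neighbour reached is $(m+1,0)$ with probability
\[
p=\frac{1/(2+\lambda)}{(1+\lambda)/(2+\lambda)}=\frac{1}{1+\lambda},
\]
and $(m-1,0)$ with probability $q=\frac{\lambda}{1+\lambda}$. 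The ratio $q/p=\lambda$ does not depend on $m$. The event that $(b,0)$ is hit before $(a,0)$ depends only on this embedded chain. We may take $a<m<b$, since the boundary cases $m=a$ and $m=b$ give $0$ and $1$, which agree with the formula. For such $m$, all the intermediate states $a+1,\dots,b-1$ are $\ge1$, so the root's special rule never matters.

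Next we solve the gambler's ruin problem. Set $h(m)=P_m(a,b,+)$ with $h(a)=0$ and $h(b)=1$. First-step analysis for the embedded chain gives $h(m)=p\,h(m+1)+q\,h(m-1)$ for $a<m<b$. Equivalently,
\[
h(m+1)-h(m)=\lambda\bigl(h(m)-h(m-1)\bigr),
\]
so $h(m)-h(a)=c\sum_{i=0}^{m-a-1}\lambda^i$ for some constant $c$. Normalising with $h(b)=1$ yields $\frac{1-\lambda^{m-a}}{1-\lambda^{b-a}}$ when $\lambda\ne1$, and $\frac{m-a}{b-a}$ when $\lambda=1$.

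Uniqueness of the solution also needs justifying. The walk exits $\{(a,0),\dots,(b,0)\}$ plus the attached bushes almost surely, since the embedded chain lives on a finite interval with positive probabilities of moving in each direction. Hence $h$ really is the unique solution of this boundary value problem.

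The only mildly delicate step is justifying that bush excursions can be erased. Each excursion returns almost surely to its starting vertex, and the next backbone move after a return is again distributed as the first-step choice conditioned on not entering the bush. This follows from the strong Markov property at the successive return times to $(m,0)$. Everything else is routine.
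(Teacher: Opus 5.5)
Your proof is correct and follows essentially the same route as the paper: the paper does first-step analysis at $(m,0)$, treats the bush step as a self-loop term $\frac{1}{2+\lambda}P_m(a,b,+)$, and rearranges to $P_m(a,b,+)=\frac{\lambda}{1+\lambda}P_{m-1}(a,b,+)+\frac{1}{1+\lambda}P_{m+1}(a,b,+)$. This is exactly the recurrence your embedded backbone chain gives, and the paper then solves it with $P_a(a,b,+)=0$ and $P_b(a,b,+)=1$ just as you do. (Your almost-sure-exit argument for uniqueness is not needed: a second-order linear recurrence with values prescribed at $a$ and $b>a$ has a unique solution by algebra alone.)
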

\begin{proof}
Conditioning on the first step from $(m,0)$ gives
\begin{align*}
  P_m(a,b,+)=\frac{\lambda}{2+\lambda}P_{m-1}(a,b,+)
  +\frac{1}{2+\lambda}P_{m+1}(a,b,+)+\frac{1}{2+\lambda}P_{m}(a,b,+),
\end{align*}
for $a<m<b.$ As a consequence, we get
\begin{align}\label{pmab}
  P_m(a,b,+)=\frac{\lambda}{1+\lambda}P_{m-1}(a,b,+)
  +\frac{1}{1+\lambda}P_{m+1}(a,b,+),\quad a<m<b.
\end{align}
For the boundary values, clearly we have
\begin{align}\label{bv}
   P_a(a,b,+)=0, \quad P_b(a,b,+)=1.
\end{align}
Solving the linear recurrence \eqref{pmab} with boundary conditions \eqref{bv} gives  \eqref{eq:pmab}.\qed
\end{proof}
\begin{remark}
  Let $a\le m\le b$ be integers and $0<p<1$. Set $\rho = (1-p)/p$. Consider a nearest-neighbor random walk $(Y_n)_{n\ge0}$ on $\mathbb{Z}$ starting from $m$, which moves right with probability $p$ and left with probability $1-p$ at each step. Denote also by $P_m(a,b,+)$ the probability that the walk hits $b$ before $a$. Then the classical gambler's ruin formula gives
  \begin{align}\label{eq:gam}
    P_m(a,b,+) =
    \begin{cases}
      \dfrac{1 - \rho^{\,m-a}}{1 - \rho^{\,b-a}}, & \text{if } p \neq 1/2,\\[10pt]
      \dfrac{m-a}{b-a}, & \text{if } p = 1/2.
    \end{cases}
  \end{align}
\end{remark}
\subsection{Bush return time}
Fix $k\ge2.$ Let $R_{k-1}$ be the time (number of steps) it takes to return to $(k-1,0)$ after first entering the bush, i.e., starting from $(k-1,1)$ and stopping upon first hitting $(k-1,0)$. By the strong Markov property, $R_{k-1}$ is independent of the rest of the walk and has a distribution depending only on $k-1$. We have the following lemma.

\begin{lemma}\label{lem:bushre}
Fix $k\ge 2$. Then for $0<\lambda\ne 1,$ we have
\begin{align}\label{eq:erk1}
\mathbb{E}[R_{k-1}] =
     \frac{2}{1-\lambda}\lambda^{-(k-2)} - \frac{1+\lambda}{1-\lambda},
 \end{align}
 and there exists a constant $c_1>0$  such that for all $k\ge 2$,
\begin{align}\label{eq:er12}
\mathbb{E}[R_{k-1}^2] \le c_1 \bigl(\mathbb{E}[R_{k-1}]\bigr)^2.
\end{align}
\end{lemma}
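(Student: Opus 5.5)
Throughout, write $L=k-1$ for the depth of the bush at $(k-1,0)$. Inside this bush the walk is a one-dimensional nearest-neighbour chain. Identify $(k-1,j)$ with $j\in\{0,1,\dots,L\}$. At $1\le j<L$ the vertex has one child, so the walk moves to $j+1$ with probability $p=\frac{1}{1+\lambda}$ and to $j-1$ with probability $\frac{\lambda}{1+\lambda}$. At the leaf $L$ it is reflected to $L-1$. Then $R_{k-1}=\tau_0$ is the hitting time of $0$ for this chain started at $1$, and the rest of the tree plays no role. This chain is reversible, with conductance $c_j=\lambda^{-j}$ on the edge $\{j-1,j\}$, $1\le j\le L$.

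\textbf{First moment.} I would use the standard formula for reversible chains: the expected return time to $0$ through the edge $\{0,1\}$ equals $2\sum_e c(e)/c_1$, where the sum runs over all edges of the bush. Subtracting the first step gives
\[
\mathbb E_1[\tau_0]=\frac{2\sum_{j=1}^{L}\lambda^{-j}}{\lambda^{-1}}-1=\frac{2(\lambda^{-L}-1)}{\lambda^{-1}-1}\lambda^{-1}\cdot\lambda-1 .
\]
With $L=k-1$ this simplifies to $\frac{2}{1-\lambda}\lambda^{-(k-2)}-\frac{1+\lambda}{1-\lambda}$, which is \eqref{eq:erk1}. Alternatively, one solves the linear recursion $h_j=1+p\,h_{j+1}+(1-p)h_{j-1}$ with $h_0=0$ and $h_L=1+h_{L-1}$, as in the proof of \eqref{eq:pmab}. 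Either way this step is routine.

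\textbf{Second moment.} I would use the identity
\[
\mathbb E_1[\tau_0^2]=2\,\mathbb E_1\Bigl[\sum_{n<\tau_0}h(X_n)\Bigr]-\mathbb E_1[\tau_0]
\le 2\sum_{j=1}^{L}G(1,j)\,h(j),
\]
where $h(j)=\mathbb E_j[\tau_0]$ and $G(1,j)$ is the expected number of visits to $j$ before $\tau_0$ starting from $1$. The identity follows from $\tau^2=2\sum_{n<\tau}(\tau-n)-\tau$ and the Markov property. Both $G$ and $h$ are explicit: $h(j)=\sum_{i=1}^{j}\mathbb E_i[\tau_{i-1}]$, where $\mathbb E_i[\tau_{i-1}]=\frac{2\sum_{m\ge i}^{L}\lambda^{-m}}{\lambda^{-i}}-1$. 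Also $G(1,j)$ is a constant multiple of $\lambda^{-j}$ times a bounded factor, coming from reversibility and a gambler's-ruin escape probability.

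The proof then splits into two cases.

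\emph{Case $\lambda<1$.} I bound $h(j)\le h(L)$ and use $\sum_j G(1,j)\le \mathbb E_1\tau_0$. It then suffices to show $h(L)\le K\,\mathbb E_1[\tau_0]$. This holds because $\mathbb E_i[\tau_{i-1}]\asymp\lambda^{-(L-i)}$, so $h(L)$ is a geometric sum dominated by $\mathbb E_1[\tau_{0}]\asymp\lambda^{-L}$.

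\emph{Case $\lambda>1$.} Here the crude bound fails, since $h(L)\asymp L$ while $\mathbb E_1[\tau_0]$ stays bounded. Instead I use the finer facts that $\mathbb E_i[\tau_{i-1}]\le\frac{1+\lambda}{\lambda-1}$, so $h(j)\le Kj$, and that $G(1,j)\le K\lambda^{-j}$, because reaching level $j$ against drift costs a factor $\lambda^{-j}$. This gives $\sum_j G(1,j)h(j)\le K\sum_j j\lambda^{-j}<\infty$. Since $\mathbb E_1[\tau_0]\ge1$, this yields \eqref{eq:er12}.

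The main obstacle is obtaining a constant $c_1$ uniform in $k$, in particular handling the $\lambda>1$ case correctly via the Green's-function weighting rather than the maximum.
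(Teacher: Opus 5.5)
Your proof is correct, but it is organized around reversibility rather than the paper's first-step recursions.

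\textbf{What the paper does.} It solves the recursion $E_j=1+\frac{\lambda}{1+\lambda}E_{j-1}+\frac{1}{1+\lambda}E_{j+1}$, $E_{k-1}=1+E_{k-2}$, explicitly for every $j$, and reads off \eqref{eq:erk1} at $j=1$. It then writes the analogous recursion for $F_j=\mathbb E[\xi_j^2]$ and passes to $\Delta_j=F_{j+1}-F_j$, which satisfies $\Delta_{j-1}=\lambda^{-1}\Delta_j+g_j$ with $\Delta_{k-2}=1+2E_{k-2}$. Finally it iterates backward from the leaf, inserting $E_j\le K\lambda^{-k}$ when $\lambda<1$ and $E_j\le Kj$ when $\lambda>1$.

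\textbf{What you do.} You get the mean from the return-time formula $2\sum_e c(e)/c_1$. You get the second moment from the occupation-time identity $\mathbb E_1[\tau_0^2]=2\sum_j G(1,j)h(j)-\mathbb E_1[\tau_0]$.

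\textbf{How they relate.} The two second-moment arguments compute the same sum. Use $\lambda E_{j-1}+E_{j+1}=(1+\lambda)(E_j-1)$ to rewrite the source term as $g_j=\frac{1+\lambda}{\lambda}(2E_j-1)$. The weights $\lambda^{-(j-1)}$ produced by unrolling the iteration equal $\frac{\lambda}{1+\lambda}G(1,j)$ for $j<k-1$, and the leaf weight $\lambda^{-(k-2)}$ equals $G(1,k-1)$. So the paper's unrolled recursion is exactly $F_1=\sum_{j=1}^{k-1}G(1,j)(2E_j-1)$, which is your identity.

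\textbf{What each version buys.} Your packaging makes the case split transparent. For $\lambda<1$ the crude bound $2h(L)\,\mathbb E_1[\tau_0]$ suffices, because $h(L)\asymp\mathbb E_1[\tau_0]\asymp\lambda^{-L}$. For $\lambda>1$ you need the geometric decay of $G(1,j)$ to beat $h(j)\le Kj$. Your route also avoids solving for all the $E_j$. The paper's version is self-contained, using no electrical-network or Green-function facts, and it yields an explicit formula for every $E_j$.

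The one step you only sketched, $G(1,j)\le K\lambda^{-j}$, follows from reversibility. Write $\pi(j)$ for the total conductance at $j$. Then $G(1,j)=\frac{\pi(j)}{\pi(1)}G(j,1)$, and $G(j,1)=G(1,1)\le\frac{1+\lambda}{\lambda}$ because every path from $j$ to $0$ passes through $1$. So there is no gap.
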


\begin{proof}
The bush attached to $(k-1,0)$ is a path of length $k-1$ with vertices $(k-1,1),\dots,(k-1,k-1),$ where $(k-1,k-1)$ is a leaf.
Let $\xi_0=0$ and for $j=1,\dots,k-1$ let $\xi_j$ be the number of steps to hit $(k-1,0)$ starting from $(k-1,j)$.
Set $E_j = \mathbb{E}[\xi_j]$, $F_j = \mathbb{E}[\xi_j^2]$, $0\le j\le k-1$.

Consider at first the first moments.
 Conditioning on the first step gives
\begin{align*}
&E_j = 1 + \frac{\lambda}{1+\lambda}E_{j-1} + \frac{1}{1+\lambda}E_{j+1}, \quad 1\le j\le k-2,\\
&E_{k-1} = 1 + E_{k-2}.
\end{align*}
Solving these equations,  we obtain
\begin{align}\label{eq:Ej}
  E_j
=  \frac{2\lambda^{-(k-2)}}{(1-\lambda)^2}(1-\lambda^j) - \frac{1+\lambda}{1-\lambda}j,\quad 1\le j\le k-1.
\end{align}
Noticing that $R_{k-1}=E_1,$ thus putting $j=1$ in \eqref{eq:Ej}, we get \eqref{eq:erk1}.

For the second moments,
  conditioning on the first step gives
\begin{align*}
&F_j = \frac{\lambda}{1+\lambda} E[(1+\xi_{j-1})^2] + \frac{1}{1+\lambda} E[(1+\xi_{j+1})^2], \quad 1 \le j \le k-2,\\
&F_{k-1} = E[(1+\xi_{k-2})^2] = 1 + 2E_{k-2} + F_{k-2}.
\end{align*}
As a consequence, we get
\begin{align*}
&F_j = 1 + \frac{\lambda}{1+\lambda}(2E_{j-1}+F_{j-1}) + \frac{1}{1+\lambda}(2E_{j+1}+F_{j+1}), \quad 1 \le j \le k-2,\\
&F_{k-1} = 1 + 2E_{k-2} + F_{k-2}.
\end{align*} Define $\Delta_j = F_{j+1}-F_j$ for $j=0,\dots,k-2$.
We infer that
\begin{align}\label{eq:F}
&\Delta_{j-1} = \frac{1}{\lambda}\Delta_j + \frac{1+\lambda}{\lambda} + \frac{2}{\lambda}(\lambda E_{j-1} + E_{j+1}), \ \ 1\le j\le k-2,\\
\label{eq:G}
&\Delta_{k-2} = 1 + 2E_{k-2}.
\end{align}

To show the second part of the lemma, we
 treat two cases $0<\lambda<1$ and $\lambda>1$ separately.

\paragraph{\it Case $\lambda<1.$}
By \eqref{eq:Ej}, we have
\begin{align*}
E_j \le K_1 \lambda^{-k} \quad \text{for all } j=0,\dots,k-1,
\end{align*}
with a constant $K_1>0$ independent of $k.$
 Consequently, on accounting of \eqref{eq:F} and \eqref{eq:G},  for some constant $K_2>0$ depending only on $\lambda$ we have
\[
\Delta_{j-1} \le \frac{1}{\lambda} \Delta_j + K_2\lambda^{-k}, \quad 1\le j\le k-2
\]
and $\Delta_{k-2} \le K_2\lambda^{-k}$.  Iterating backward, we obtain
\[
\Delta_0 \le K_2\lambda^{-k}\sum_{i=0}^{k-2}\lambda^{-i} \le \frac{K_2}{1-\lambda}\lambda^{-2k}.
\]
  But from \eqref{eq:erk1}, we have  $\mathbb{E}[R_{k-1}]  \ge K_3\lambda^{-k}$ for some $K_3>0$ depending only on $\lambda.$ Thus with $c_1 = \frac{K_2}{K_3^2(1-\lambda)}$, we get  $$E[R_{k-1}^2]=F_1 =\Delta_0 \le c_1 (E[R_{k-1}])^2.$$

\paragraph{\it Case $\lambda>1.$}
For $\lambda>1,$ from \eqref{eq:Ej}, we get  $E_j<K_4j,$ $1\le j\le k-1$ for some constant $K_4$ independent $j,k.$
 Hence, from  \eqref{eq:F} and \eqref{eq:G},  for proper constant $K_5$ depending only on $\lambda,$ we have \[
\Delta_{j-1} \le \frac{1}{\lambda} \Delta_j + K_5j,\quad 1\le j\le k-2
\]
and $\Delta_{k-2} \le K_5(k-1)$.  Iterating, we get
\[
\Delta_0 \le K_5 \sum_{i=1}^{k-1}i\lambda^{-(i-1)} \le K_6,
\] for some constant $K_6$ depending only on $\lambda.$
 Since $\mathbb{E}[R_{k-1}]\ge 1,$  we have $$E[R_{k-1}^2]=F_1=\Delta_0\le K_6 \le  K_6(E[R_{k-1}])^2.$$ Thus, taking $c_1=K_6,$  we get \eqref{eq:er12}.
\qed
\end{proof}

\subsection{Hitting times of the backbone vertices}

Let $ S_0=0$ and for $k\ge1$ define
\[
 S_k = \inf\{n> 0: X_n = (k,0)\},
\]
the first time that the walk reaches the backbone vertex $(k,0)$. Set $$T_k =  S_k - S_{k-1}\text{ for } k\ge1.$$ By the strong Markov property, the random variables $\{T_k\}_{k\ge1}$ are independent. Since  the bush at $(k,0)$ has length $k,$ the distribution of $T_k$ varies with $k.$ The following proposition plays a key role in the proofs of Proposition \ref{thm:logscale}, Theorems \ref{thm:zero} and \ref{cor:limsup}.
\begin{proposition}\label{prop:tk} Fix $0<\lambda\ne 1.$ {\rm(i)} We have
\begin{align}\label{eq:etk}
  E[T_k]= \frac{2\lambda(\lambda^2+\lambda-1)}{(\lambda-1)^2(1+\lambda)}\lambda^{k-1}
-\frac{\lambda^2+2\lambda-1}{(\lambda-1)^2}
+\frac{2\lambda^{2-k}}{(\lambda-1)^2(1+\lambda)},\quad k\ge1,
\end{align}
 and \begin{align}\label{eq:etkb}
  E[T_k^2]\le c_2\left(\lambda\vee \lambda^{-1}\right)^{2k}, \quad k\ge 1
\end{align} for some constant $c_2$ depending only on $\lambda.$

 {\rm(ii)}
  There exist constants $c_3>0$ and $\delta>0$ depending only on $\lambda$ such that
\begin{equation}\label{eq:Tk_bound}
P(T_k \ge c_3 (\lambda\vee\lambda^{-1})^k) \ge \delta,\quad k\ge1.
\end{equation}
\end{proposition}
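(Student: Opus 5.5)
The plan is a renewal decomposition of $T_k$ at the successive visits to $(k-1,0)$. Started at $(k-1,0)$ (for $k\ge2$), the walk makes excursions: to the right, reaching $(k,0)$ and finishing; into the bush at $(k-1,0)$, a bush excursion of length $1+R_{k-1}$; or to the left, to $(k-2,0)$, after which it needs an independent copy $T'_{k-1}$ of $T_{k-1}$ to return. Each excursion type occurs with probability $\frac{1}{2+\lambda},\frac{1}{2+\lambda},\frac{\lambda}{2+\lambda}$. Hence
\[
T_k \overset{d}{=} \sum_{i=1}^{G}\Bigl(\mathbf 1_{\{\text{bush}\}}(1+R^{(i)}_{k-1})+\mathbf 1_{\{\text{left}\}}(1+T^{(i)}_{k-1})\Bigr)+1,
\]
where $G$ is the number of failed excursions, geometric with $P(G\ge j)=\bigl(\frac{1+\lambda}{2+\lambda}\bigr)^j$, and all pieces are independent.

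For part (i), taking expectations (Wald) gives the linear recursion
\[
E[T_k]=1+E[R_{k-1}]+\lambda\bigl(1+E[T_{k-1}]\bigr),\qquad k\ge2.
\]
Here $E[R_{k-1}]$ comes from Lemma~\ref{lem:bushre}, and the initial value $E[T_1]=1$ holds since the root moves deterministically to $(1,0)$. Solving this first-order inhomogeneous recursion with the explicit geometric forcing $\lambda^{-(k-2)}$ should yield \eqref{eq:etk}; this is routine algebra, and I would verify it by checking $k=1,2$ and the recursion. For the second moment, square the decomposition: $E[T_k^2]$ satisfies
\[
E[T_k^2]\le \lambda E[T_{k-1}^2]+K\bigl(E[R_{k-1}^2]+(E[T_{k-1}])^2+E[R_{k-1}]^2+1\bigr),
\]
using independence and $E[G^2]<\infty$ for the cross terms. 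By \eqref{eq:er12} and \eqref{eq:erk1}, $E[R_{k-1}^2]\le K\lambda^{-2k}$ when $\lambda<1$ and $\le K$ when $\lambda>1$. By \eqref{eq:etk}, $(E[T_{k-1}])^2\le K(\lambda\vee\lambda^{-1})^{2k}$. Iterating $a_k\le\lambda a_{k-1}+K\mu^{2k}$ with $\mu=\lambda\vee\lambda^{-1}$ gives $a_k\le K\mu^{2k}\sum_i(\lambda/\mu^2)^i$. The ratio satisfies $\lambda/\mu^2<1$ in both regimes ($\lambda$ if $\lambda<1$, $1/\lambda$ if $\lambda>1$), so the sum is bounded, which proves \eqref{eq:etkb}.

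For part (ii), I will use Paley--Zygmund:
\[
P\bigl(T_k\ge \tfrac12E[T_k]\bigr)\ge \frac{(E[T_k])^2}{4E[T_k^2]}.
\]
The main obstacle is the lower bound $E[T_k]\ge K\mu^k$ for large $k$, since \eqref{eq:etk} has a coefficient that may vanish or be negative. If $\lambda<1$, the dominant term is $\frac{2\lambda^{2-k}}{(\lambda-1)^2(1+\lambda)}$, which is positive, giving order $\lambda^{-k}$. If $\lambda>1$, the leading coefficient $\lambda^2+\lambda-1>0$, giving order $\lambda^k$. So $E[T_k]\ge K\mu^k$ for $k\ge k_0$. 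Combined with \eqref{eq:etkb}, the Paley--Zygmund ratio is bounded below uniformly, with $c_3=K/2$. For the finitely many $k<k_0$, shrink $c_3$ so that $c_3\mu^k\le1\le T_k$, which gives probability $1$ there. Then take $\delta$ as the minimum of the two bounds.
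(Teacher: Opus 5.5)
Your route is the paper's in slightly different packaging. The geometric sum over failed excursions is the unrolled form of the paper's first-step decomposition, in which $T_k$ reappears on the right-hand side. Your second-moment inequality is the paper's \eqref{eq:ak2k1}, and part (ii) is the same Paley--Zygmund argument; your separate handling of small $k$ via $T_k\ge1$ is fine.

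However, the mean recursion you wrote is wrong, so solving it will not give \eqref{eq:etk}. Since $E[G]=1+\lambda$, Wald gives on average one bush excursion of expected cost $1+E[R_{k-1}]$ and $\lambda$ left excursions of expected cost $1+E[T_{k-1}]$, plus the final step:
\[
E[T_k]=1+\bigl(1+E[R_{k-1}]\bigr)+\lambda\bigl(1+E[T_{k-1}]\bigr)=2+\lambda+\lambda E[T_{k-1}]+E[R_{k-1}].
\]
This is the paper's \eqref{eq:ak_sol}; you dropped one of the two unit terms.

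The $k=2$ check you proposed would catch this. From $(1,0)$ every failed excursion takes exactly two steps, because $R_1=T_1=1$. So $T_2=1+2G$ and $E[T_2]=3+2\lambda$, which is what \eqref{eq:etk} gives at $k=2$, whereas your recursion gives $2+2\lambda$.

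With the corrected recursion, the solution is the sum of three pieces:
\begin{itemize}
\item a homogeneous term $A\lambda^{k}$;
\item the constant particular solution $-\frac{\lambda^2+2\lambda-1}{(\lambda-1)^2}$;
\item the particular solution $\frac{2\lambda^{2-k}}{(\lambda-1)^2(1+\lambda)}$ for the forcing $\frac{2}{1-\lambda}\lambda^{2-k}$.
\end{itemize}
Fixing $A$ by $E[T_1]=1$ gives exactly \eqref{eq:etk}.

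The slip does not propagate. Your second-moment bound and part (ii) only use $E[T_k]\asymp(\lambda\vee\lambda^{-1})^k$, which holds either way.

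One further harmless slip: for $\lambda<1$ the ratio $\lambda/\mu^2$ (with your $\mu=\lambda\vee\lambda^{-1}$) equals $\lambda^3$, not $\lambda$. It is still below $1$, so your iteration goes through.
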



\proof
For simplicity, for $k\ge 1,$ we write
\[
a_k = E[T_k],\qquad a_k^{(2)} = E[T_k^2].
\]
By the first-step decomposition from $(k-1,0),$ we have
\begin{align}\label{eq:Tkdecom}
T_k =
\begin{cases}
1, & \text{if the first step is to } (k,0), \\
1 + T_{k-1}' + T_k', & \text{if the first step is to } (k-2,0) , \\
1 + R_{k-1} + T_k'', & \text{if the first step is into the bush},
\end{cases}
\end{align}
where
 $T_{k-1}'$ denotes the number of steps of the walk, starting from $(k-2,0),$ ending at $(k-1,0)$, which has the same distribution as $T_{k-1};$ $T_k'$ denotes the number of steps of the walk, starting from $(k-1,0),$ ending at $(k,0),$ which has the same distribution as $T_k$ and is independent of $T_{k-1}';$ $T_k''$ denotes also the number of steps of the walk, starting from $(k-1,0),$ ending at $(k,0),$ which has the same distribution as $T_k$ and is independent of $R_{k-1}.$

From  \eqref{eq:Tkdecom}, we deduce that
\begin{align*}
a_k& = \frac{1}{2+\lambda}  + \frac{\lambda}{2+\lambda} E[1 + T_{k-1}' + T_k'] + \frac{1}{2+\lambda} E[1 + R_{k-1} + T_k'']
\end{align*}
and
\begin{align*}
  a_k^{(2)}& = \frac{1}{2+\lambda} + \frac{\lambda}{2+\lambda} E[(1 + T_{k-1}' + T_k')^2] + \frac{1}{2+\lambda} E[(1 + R_{k-1} + T_k'')^2].
\end{align*}
Using independence and the facts $T_k''\overset{d}{=}T_k,$  $T_k'\overset{d}{=}T_k$ and $T_{k-1}'\overset{d}{=}T_{k-1}$, we get
\begin{equation}\label{eq:ak_sol}
a_k = 2+\lambda + \lambda a_{k-1} + E[R_{k-1}]
\end{equation}
and
\begin{align}
  \label{eq:ak2}
a_k^{(2)}& = 2+\lambda + \lambda \left(a_{k-1}^{(2)} + 2a_{k-1} + 2a_k + 2a_{k-1}a_k\right)\nonumber\\
 &+  \left(E[R_{k-1}^2] + 2E[R_{k-1}] + 2a_k + 2E[R_{k-1}]a_k\right).
\end{align}
Notice that by Lemma \ref{lem:bushre}, for $\lambda\ne 1$ and $k\ge 2,$ we have
\begin{align}\label{eq:erkln1}
\mathbb{E}[R_{k-1}] =
      \frac{2}{1-\lambda}\lambda^{-(k-2)} - \frac{1+\lambda}{1-\lambda}.
\end{align}
Substituting \eqref{eq:erkln1} into \eqref{eq:ak_sol}, we get
\begin{equation}\label{eq:akk1}
a_k =  \lambda a_{k-1} + 2+\lambda+\frac{2}{1-\lambda}\lambda^{-(k-2)} - \frac{1+\lambda}{1-\lambda}.
\end{equation}
Noticing that $a_1=1,$ thus iterating  \eqref{eq:akk1}, gives \eqref{eq:etk}.

Next we estimate the second moment.  Fix $0<\lambda\ne 1.$ In view of \eqref{eq:etk}, we can find a constant $K_1$ depending only on $\lambda$ such that
\begin{align*}
  a_k=E[T_k]\le K_1\left(\lambda\vee \lambda^{-1}\right)^k,\quad k\ge1.
\end{align*}
Furthermore, by Lemma \ref{lem:bushre}, there exists a constant $K_2$ depending only on $\lambda$ such that
\begin{align*}
  E[R_{k-1}]\le K_2 \left(\lambda\vee \lambda^{-1}\right)^k, \quad E[R_{k-1}^2]\le K_2 \left(\lambda\vee \lambda^{-1}\right)^{2k},\quad k\ge2.
\end{align*}
Therefore, on accounting of \eqref{eq:ak2}, we can find a proper constant $K_3$ depending only on $\lambda$ such that
\begin{align}\label{eq:ak2k1}
  a_k^{(2)} \le   \lambda a_{k-1}^{(2)}+K_3\left(\lambda\vee \lambda^{-1}\right)^{2k}.
\end{align}
We now show by induction that there exists a constant $c_2$ such that \begin{align}\label{eq:aktb} a_k^{(2)} \le c_2 \left(\lambda\vee \lambda^{-1}\right)^{2k},\quad k\ge1.\end{align}
Since $a_1^{(2)}=1,$ we can choose $c_2$ large enough so that $a_1^{(2)} \le c_2 \left(\lambda\vee \lambda^{-1}\right)^2$.
Assume that $a_{k-1}^{(2)} \le c_2 \left(\lambda\vee \lambda^{-1}\right)^{2(k-1)}$. Then by \eqref{eq:ak2k1}, we deduce that
\begin{align*}a_k^{(2)} &\le   \lambda c_2 \left(\lambda\vee \lambda^{-1}\right)^{2(k-1)}+K_3\left(\lambda\vee \lambda^{-1}\right)^{2k}\\
&=\left(\lambda\left(\lambda\vee \lambda^{-1}\right)^{-2}c_2+K_3\right)\left(\lambda\vee \lambda^{-1}\right)^{2k}.\end{align*}
Since $\lambda\left(\lambda\vee \lambda^{-1}\right)^{-2}<1,$ choosing $c_2$ large enough so that $\lambda\left(\lambda\vee \lambda^{-1}\right)^{-2}c_2+K_3<c_2,$ we get $a_k^{(2)}\le c_2\left(\lambda\vee \lambda^{-1}\right)^{2k}.$ Thus \eqref{eq:aktb} is true and so is \eqref{eq:etkb}.
Part (i) of Proposition \ref{prop:tk} is proved.

To prove Part (ii) of Proposition \ref{prop:tk}, we use the Paley-Zygmund inequality: for any non-negative random variable $X$ with finite second moment,
\begin{align}\label{eq:PZ}
P\left(X\ge\frac12E[X]\right)\ge\frac14\frac{(E[X])^2}{E[X^2]}.
\end{align}
Notice that by \eqref{eq:etk} and \eqref{eq:etkb}  we have
$$E[T_{k}] \ge K_4 (\lambda\vee\lambda^{-1})^{k}, \quad E[T_{k}^2] \le c_2 (\lambda\vee\lambda^{-1})^{2k}, \quad k\ge1$$ for some constant $K_4>0$ and $c_2>0$ which depend only on $\lambda.$
Taking $X=T_k,$ from \eqref{eq:PZ}   we get
\[
P\left(T_k\ge\frac{K_4}{2}(\lambda\vee\lambda^{-1})^{k}\right)\ge P\left(T_k\ge\frac{1}{2}E[T_k]\right)\ge\frac{K_4}{4c_2}, \quad k\ge1.
\]
Thus, setting $c_3=\frac{K_4}{2},$ $\delta=\frac{K_4}{4c_2},$ we obtain \eqref{eq:Tk_bound}.
 This completes the proof of Part (ii) of Proposition \ref{prop:tk}. \qed

\section{Recurrence criterion and sub-ballistic behavior: proof of Theorem \ref{thm:zero}}\label{sec:C}

\proof For $0\le a\le m\le b,$
letting $b\to\infty$ in \eqref{eq:pmab}, we obtain
\begin{equation}\label{eq:pmainf}
P_m(a,\infty,+) =\begin{cases} 1 - \lambda^{m-a}, &\text{if }0<\lambda<1,\\
0, &\text{if }\lambda\ge 1,\end{cases}
\end{equation}
the probability the walk never hits $(a,0),$ starting from $(m,0).$
In particular, starting from $(m+1,0)$, the probability the walk  never returns to $(m,0)$ is
\[
P_{m+1}(m,\infty,+) =\begin{cases} 1 - \lambda, &\text{if }0<\lambda<1,\\
0,& \text{if }\lambda\ge 1.\end{cases}
\]
If $0<\lambda<1$, from each backbone vertex there is a positive chance to never return to the previous one. Hence the walk is transient. If $\lambda\ge1,$ from each backbone vertex, with probability $1$ the walk will return to the previous one. Hence the walk is recurrent.

Now we show that for the transient case, $(X_n)_{n\ge0}$ is sub-ballistic. Indeed, this is a corollary of Theorem \ref{cor:limsup}. But here we prove it directly, since the proof is not too long. Suppose $0<\lambda<1.$ Notice that
 the total time to reach $(n,0)$ is $S_n = \sum_{k=1}^n T_k$. Clearly $|X_{S_n}| = n$. For any $m$, let $n$ be the unique integer such that $S_n \le m < S_{n+1}$. Then the possible maximal distance of $X_m$ from the root is $2n$ (attained at the leaf $(n,n)$). Hence $|X_m| \le 2n$, and therefore
\[
\frac{|X_m|}{m} \le \frac{2n}{S_n}.
\]
If we prove that \begin{align}\label{snni}\lim_{n\to\infty}S_n/n =\infty, \quad \text{a.s.,}\end{align}  then $|X_m|/m \to 0$ almost surely as $m\to\infty,$  that is,  the walk has zero speed.

Next we show \eqref{snni}.
Since $T_k,$ $k\ge1$ are independent,  in view of \eqref{eq:Tk_bound}, the second Borel-Cantelli lemma then implies that almost surely, $T_k \ge c_3 \lambda^{-k}$ for infinitely many $k$. Let $k_1 < k_2 < \cdots$ be those indices. Then for any $n$, let $$k(n) = \max\{k_j : j\ge 1, k_j \le n\}.$$
Clearly, we have $S_n \ge T_{k(n)} \ge c_3 \lambda^{-k(n)}$.  Let $A_k = \{T_k \ge c_3 \lambda^{-k}\}$. By Kolmogorov's strong law of large numbers for independent random variables (see e.g., Theorem 2 in \S3, Chapter IV of Shiryaev \cite{s}), we have
\[
\frac1n\sum_{k=1}^n\mathbf{1}_{A_k} - \frac1n\sum_{k=1}^n P(A_k) \to 0, \quad \text{a.s..}
\]
From \eqref{eq:Tk_bound}, we see that $\liminf_{n\to\infty}\frac1n\sum_{k=1}^n P(A_k) \ge \delta.$ We thus have $\liminf_{n\to\infty} \frac1n\sum_{k=1}^n \mathbf{1}_{A_k} \ge \delta$ almost surely. Consequently, for  $n$ large enough, we have
\[
M_n := \#\{k \le n : A_k \text{ occurs}\} \ge  \delta n/2, \quad \text{a.s.}.
\]
 Hence, we get
  $$k(n) \ge M_n\ge \delta n/2,\quad \text{a.s.,}$$  for large $n.$   As a result,
\[
\frac{S_n}{n} \ge \frac{T_{k(n)}}{n} \ge \frac{c_3 \lambda^{-k(n)}}{n} \ge \frac{c_3 \lambda^{- \delta n/2}}{n} ,
\]
almost surely for large $n.$ Therefore, we get \eqref{snni}. Theorem \ref{thm:zero} is proved. \qed

\section{Logarithmic scaling of the backbone visits and displacement}\label{sec:D}

In this section,  we prove Proposition~\ref{thm:logscale} and Theorem~\ref{cor:limsup}. The proof of Proposition \ref{thm:logscale} is given in Subsection \ref{sec:41}. The proof of Theorem~\ref{cor:limsup} is longer and is divided into two parts:
the case $0<\lambda<1$ is treated in Subsection~\ref{sec:42},
and the case $\lambda>1$ in Subsection~\ref{sec:43}.

\subsection{Proof of Proposition \ref{thm:logscale}}\label{sec:41}

Recall that $S_n = \sum_{k=1}^n T_k$ is the time that the walk takes to reach $(n,0)$. To prove Proposition~\ref{thm:logscale}, we need the following lemma, which establishes the exponential growth rate of $S_n$.
\begin{lemma}\label{lem:logSn}
For $0<\lambda\ne 1,$ we have
\[
\lim_{n\to\infty}\frac{\log S_n}{n}=\log\left(\lambda\vee \lambda^{-1}\right),\quad\text{a.s.}.
\]
\end{lemma}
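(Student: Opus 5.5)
Write $\theta=\lambda\vee\lambda^{-1}>1$. The plan is to prove matching upper and lower bounds
\[
\limsup_{n\to\infty}\frac{\log S_n}{n}\le\log\theta
\quad\text{and}\quad
\liminf_{n\to\infty}\frac{\log S_n}{n}\ge\log\theta
\]
almost surely. Both use only Proposition~\ref{prop:tk} and the independence of the $T_k$.

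\emph{Upper bound.} By \eqref{eq:etk} there is a constant $K$ with $E[T_k]\le K\theta^k$. Hence
\[
E[S_n]\le K\sum_{k=1}^n\theta^k\le K'\theta^n.
\]
Fix $\varepsilon>0$. Markov's inequality gives
\[
P\bigl(S_n\ge\theta^{n}e^{\varepsilon n}\bigr)\le K'e^{-\varepsilon n},
\]
and this is summable in $n$. Borel--Cantelli then yields $\log S_n\le n\log\theta+\varepsilon n$ eventually, almost surely. Letting $\varepsilon\downarrow0$ along a countable sequence gives the upper bound.

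\emph{Lower bound.} Since $S_n\ge T_{k}$ for every $k\le n$, it suffices to find, for each large $n$, some index $k\le n$ with $k\ge(1-\varepsilon)n$ and $T_k\ge c_3\theta^k$. Let $A_k=\{T_k\ge c_3\theta^k\}$. These events are independent, and by \eqref{eq:Tk_bound} each has probability at least $\delta$. The event that none of $A_k$ occurs for $k\in((1-\varepsilon)n,n]$ therefore has probability at most $(1-\delta)^{\lfloor\varepsilon n\rfloor}$. This is summable in $n$, so by Borel--Cantelli, almost surely for all large $n$ some $A_k$ with $k>(1-\varepsilon)n$ occurs. On that event
\[
\log S_n\ge\log c_3+(1-\varepsilon)n\log\theta.
\]
Dividing by $n$ and letting $\varepsilon\downarrow0$ gives the lower bound.

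This block argument is what upgrades the density statement used in the proof of Theorem~\ref{thm:zero}, namely $k(n)\ge\delta n/2$, to $k(n)\ge(1-\varepsilon)n$. That upgrade is the only real obstacle, because the sharp constant $\log\theta$ needs a large $T_k$ with $k$ close to $n$. The second-moment bound \eqref{eq:etkb} is not needed except through \eqref{eq:Tk_bound}.
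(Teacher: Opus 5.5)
Your proof is correct and follows essentially the same route as the paper: Markov's inequality plus Borel--Cantelli for the upper bound, and, for the lower bound, the independent events $\{T_k\ge c_3\theta^k\}$ (each of probability at least $\delta$) together with Borel--Cantelli on a window ending at $n$. The differences are only quantitative: you use the first moment with threshold $\theta^n e^{\varepsilon n}$ where the paper uses the bounded second moment of $V_n=\theta^{-n}S_n$ with threshold $n\theta^n$, and your window has length $\varepsilon n$ where the paper's has length $\alpha\log n$; both choices suffice for the stated limit.
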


\begin{proof}
For $k\ge1,$ define $U_k=\left(\lambda\vee \lambda^{-1}\right)^{-k} T_k$. Then $U_k,k\ge1$ are independent. By Proposition \ref{prop:tk},
 there exist constants  $K_1$ such that \begin{align}\label{eq:evuk}E[U_k] \le K_1\text{ and } \text{Var}(U_k) \le K_1, \quad k\ge1.
\end{align}
Consider $$V_n=\left(\lambda\vee \lambda^{-1}\right)^{-n} S_n=\sum_{k=1}^n\left(\lambda\vee \lambda^{-1}\right)^{-(n-k)}U_k.$$ We show that $\frac1n\log V_n\to0$ almost surely as $n\to\infty$, which is equivalent to the desired statement.

From \eqref{eq:evuk}, we have $$E[V_n]\le K_1\sum_{j=0}^{n-1}\left(\lambda\vee \lambda^{-1}\right)^{-j}\le K_1/(1-\lambda\wedge \lambda^{-1}),\quad n\ge1$$ and
\begin{align*}
\text{Var}(V_n)&=\sum_{k=1}^n\left(\lambda\vee \lambda^{-1}\right)^{-2(n-k)}\text{Var}(U_k)\\
&\le K_1\sum_{j=0}^{n-1}\left(\lambda\vee \lambda^{-1}\right)^{-2j}\le K_1/(1-\lambda^2\wedge \lambda^{-2}),\quad n\ge1.
\end{align*}
Thus,
by Markov's inequality, for $n\ge1,$ $$P(V_n>n)\le E[V_n^2]/n^2\le  n^{-2}K_1/(1-\lambda^2\wedge \lambda^{-2}),$$ which is summable. By the Borel-Cantelli lemma, $V_n\le n$ for all sufficiently large $n$ almost surely. Hence $$\limsup_{n\to\infty}\frac1n\log V_n\le\lim_{n\to\infty}\frac{\log n}{n}=0,\quad\text{a.s..}$$

We derive next a lower bound.
Let $c_3$ and $\delta$ be the constants from \eqref{eq:Tk_bound}. Define $B_k = \{U_k \ge c_3\}$ for $k\ge1.$  Then by \eqref{eq:Tk_bound}, $B_k,$ $k\ge 1$ are independent events with $P(B_k) \ge \delta$ for  $k\ge1.$  Hence $\sum_{k=1}^\infty P(B_k) = \infty.$ The second Borel-Cantelli lemma implies that $B_k$ occurs infinitely often almost surely. Let $K_2 = \min\{k \ge 1 : B_k \text{ occurs}\}.$ Then $K_2$ is almost surely finite. For any $n \ge K_2$, the set $\{k \le n : B_k \text{ occurs}\}$ is non-empty, so we can define
\[
k_{\max}(n) = \max\{k \le n : B_k \text{ occurs}\},\qquad L_n = n - k_{\max}(n).
\]
For such $n$ we have
\begin{align*}
V_n &= \sum_{k=1}^n \left(\lambda\vee \lambda^{-1}\right)^{-(n-k)}U_k \\
&\ge \left(\lambda\vee \lambda^{-1}\right)^{-(n - k_{\max}(n))} U_{k_{\max}(n)} \ge c_3 \left(\lambda\vee \lambda^{-1}\right)^{-L_n}.
\end{align*}
Thus the inequality $V_n \ge c_3 \left(\lambda\vee \lambda^{-1}\right)^{-L_n}$ holds for all $n \ge K_2$ almost surely.

For any $\alpha>0$, the event $\{L_n > \alpha \log n\}$ means that no $B_k$ occurs for $k$ in $(n-\alpha\log n, n]$. Hence
\[
P(L_n > \alpha \log n) \le (1-\delta)^{\lfloor \alpha \log n \rfloor} \le (1-\delta)^{\alpha \log n} = n^{-\alpha \log(1/(1-\delta))}.
\]
Choose $\alpha$ large enough so that $\alpha \log(1/(1-\delta)) > 1$. Then $\sum_{n=1}^\infty P(L_n > \alpha \log n) < \infty$, and by the Borel-Cantelli lemma, almost surely $L_n \le \alpha \log n$ for all sufficiently large $n$. Consequently, for those $n$,
\[
V_n \ge c_3 \left(\lambda\vee \lambda^{-1}\right)^{-L_n} \ge c_3 \left(\lambda\vee \lambda^{-1}\right)^{-\alpha \log n} = c_3 n^{-\alpha \log\left(\lambda\vee \lambda^{-1}\right)},
\]
and therefore
\[
\liminf_{n\to\infty}\frac{\log V_n}{n} \ge \lim_{n\to\infty}\frac{\log c_3}{n} - \alpha \log\left(\lambda\vee \lambda^{-1}\right) \frac{\log n}{n}= 0
\]
almost surely. We
thus conclude that $\lim_{n\to\infty}\frac1n\log V_n =0$ almost surely. Consequently, we have $$\frac1n\log S_n = \frac1n\log V_n + \log\left(\lambda\vee \lambda^{-1}\right) \to \log\left(\lambda\vee \lambda^{-1}\right)$$ almost surely as $n\to\infty.$ \qed
\end{proof}

We are now ready to give the proof of Proposition \ref{thm:logscale}.

\noindent{\it Proof of Proposition \ref{thm:logscale}.}
Recall that for any $n\ge1$, $N_n = \max\{k \ge 1 : S_k \le n\}$. Then we have $S_{N_n} \le n < S_{N_n+1}$ which implies
\[
\frac{\log S_{N_n}}{N_n} \le \frac{\log n}{N_n} < \frac{\log S_{N_n+1}}{N_n}.
\]
 From Lemma~\ref{lem:logSn}, we have
\[
\frac{\log S_{N_n}}{N_n} \to \log\left(\lambda\vee \lambda^{-1}\right)\quad\text{and}\quad \frac{\log S_{N_n+1}}{N_n+1} \to \log\left(\lambda\vee \lambda^{-1}\right), \quad \text{a.s.,}
\] as $n\to\infty.$
Hence
\[
\frac{\log n}{N_n} \to \log\left(\lambda\vee \lambda^{-1}\right) \quad \text{a.s.,}  
\]
as  $n\to\infty.$
This completes the proof of Proposition~\ref{thm:logscale}. \qed

\subsection{Proof of Theorem \ref{cor:limsup}: Case $0<\lambda<1$}\label{sec:42}

To prove the case $0<\lambda<1$ Theorem \ref{cor:limsup}, we need the following lemma.
\begin{lemma}\label{lem:bush_time} Suppose $0<\lambda<1.$
Let $H_k$ be the number of steps it takes for the walk, starting from $(k,1)$, to reach the leaf $(k,k)$ without ever returning to $(k,0)$. Then there exists a constant $c_4$ such that $E[H_k] \le c_4 k$ for all $k\ge 1$, and moreover, $H_k = O(k)$ almost surely.
\end{lemma}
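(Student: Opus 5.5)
The plan is to identify $H_k$ with the hitting time of the leaf for the walk inside the bush \emph{conditioned} to reach $(k,k)$ before $(k,0)$. The conditioned walk is a Doob $h$-transform, and I will show that it is stochastically faster than a walk with constant drift towards the leaf. Inside the bush at $(k,0)$, the walk at $(k,j)$, $1\le j\le k-1$, moves to $(k,j+1)$ with probability $p:=\frac{1}{1+\lambda}>\frac12$ (since $0<\lambda<1$) and to $(k,j-1)$ with probability $1-p$. By the gambler's ruin formula \eqref{eq:gam} with $\rho=\lambda$,
$$h(j):=P\bigl(\text{hit }(k,k)\text{ before }(k,0)\mid\text{start at }(k,j)\bigr)=\frac{1-\lambda^{j}}{1-\lambda^{k}},\qquad 0\le j\le k .$$
The conditioned chain is again a nearest-neighbour chain on $\{1,\dots,k\}$, with up-probabilities $p_j=p\,h(j+1)/h(j)$ and down-probabilities $(1-p)h(j-1)/h(j)$. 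Since $h$ is increasing, $p_j\ge p$ for every $j$, and $p_1=1$ because $h(0)=0$, so the chain never moves down from $1$.

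Next I would couple the conditioned chain $(Z_n)$ started at $1$ with a free $p$-biased walk $Y_n=1+\sum_{i\le n}\varepsilon_i$ on $\mathbb Z$, with i.i.d. steps $\varepsilon_i=\pm1$ and $P(\varepsilon_i=1)=p$. The coupling uses common uniforms: at each step $Z$ moves up whenever $Y$ does, which is possible because $p_j\ge p$. A straightforward induction (the chains are nearest-neighbour, so they cannot cross without meeting) gives $Z_n\ge Y_n$ for all $n$ up to the hitting time of $k$. Hence
$$H_k\le \tau_k:=\inf\{n:Y_n=k\}.$$
This comparison is the step needing the most care: one must check that the monotone coupling is valid at the reflecting state $1$ and at the state where the two chains coincide. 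Both checks are standard for birth--death chains.

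For the expectation, Wald's identity for the free walk with drift $2p-1>0$ gives $E[\tau_k]=(k-1)/(2p-1)$. Therefore $E[H_k]\le c_4k$ with $c_4=(1+\lambda)/(1-\lambda)$.

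For the almost sure statement, I read ``$H_k=O(k)$ a.s.'' as $\limsup_k H_k/k<\infty$ a.s. for any realization of the family $\{H_k\}$; this is automatic in the walk, where the successive bush excursions give one such realization. Fix $A>2/(2p-1)$. Then $\{\tau_k>Ak\}\subset\{Y_{\lfloor Ak\rfloor}<k\}$. The mean of $Y_{\lfloor Ak\rfloor}-1$ is at least $(2p-1)\lfloor Ak\rfloor$, which exceeds $2k-O(1)$. A Chernoff (Hoeffding) bound therefore yields
$$P(H_k>Ak)\le P(\tau_k>Ak)\le e^{-c k}$$
for some $c=c(\lambda)>0$. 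This bound is summable in $k$, so the first Borel--Cantelli lemma gives $H_k\le Ak$ for all large $k$ almost surely. This proves $H_k=O(k)$ a.s.
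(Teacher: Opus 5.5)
Your proof is correct. Its two key ingredients are the paper's: the Doob $h$-transform with $h(j)=(1-\lambda^j)/(1-\lambda^k)$, and a monotone coupling, via common uniforms, with the free $p$-biased walk. The difference lies in how the coupling is used. The paper splits $H_k=\sum_{j=2}^k\xi_j$ into one-level passage times of the conditioned chain. It dominates each $\xi_j$ by the one-level passage time of the free walk to get $E[\xi_j]\le c_4$ and $E[\xi_j^2]\le c_4$, and then invokes Kolmogorov's strong law of large numbers. You couple the whole path and get $H_k\le\tau_k$ at once. The mean bound then follows from Wald's identity; applying it to $\tau_k\wedge n$ and letting $n\to\infty$ takes care of integrability. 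The almost sure statement follows from a Hoeffding bound plus the first Borel--Cantelli lemma. For the almost sure part your route is the cleaner one. In the paper the law of $\xi_j$ depends on $k$ through $h$, so $\{\xi_j\}$ is a triangular array, and the strong law for a single sequence does not literally apply. Uniformly bounded second moments alone only give, via Chebyshev, a non-summable bound of order $1/k$ on deviations of $H_k/(k-1)$. Your summable estimate $P(H_k>Ak)\le e^{-ck}$ needs no independence or joint structure among the $H_k$. It also gives exactly what the proof of Theorem~\ref{cor:limsup} uses, namely $\sum_k P\bigl(A_k\cap\{m_k-S_k-1>Ak\}\bigr)\le\sum_k P(H_k>Ak)<\infty$, with $A_k$ the event of that proof and $A$ your constant. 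In return, the paper's level-by-level decomposition provides independent increments and $\operatorname{Var}(H_k)=O(k)$, but neither is needed for the lemma.
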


\begin{proof}
Identify the set $\{(k,0),(k,1),\dots,(k,k)\}$ with $\{0,1,\dots,k\}$, where $0$ corresponds to the backbone vertex $(k,0)$ and $k$ to the leaf. Consider a random walk $(Y_n)_{n\ge 0}$ on $\{0,\dots,k\}$ with $P(Y_0=1)=1$ and transition probabilities
\begin{align*}
&P(Y_{n+1}=j+1\mid Y_n=j)=\frac{1}{1+\lambda}=:p,\\
&P(Y_{n+1}=j-1\mid Y_n=j)=\frac{\lambda}{1+\lambda}=:q,
\end{align*}
for $1\le j\le k-1$ and $n\ge0,$ with absorbing boundaries at $0$ and $k.$
 Let $h(j):=P_{j}(0,k,+)$ be the probability that the walk $(Y_n)_{n\ge0}$  hits $k$ before $0,$ starting from $j.$ Then by \eqref{eq:gam}, we have \begin{align}\label{eq:hj}h(j)=\frac{1-\lambda^j}{1-\lambda^k}, \quad 1\le j\le k-1.\end{align}

Let $A_k$ be the event that the walk reaches $k$ before ever returning to $0$.
We work under the conditional probability $\tilde P(\cdot)=P(\cdot\mid A_k)$. Let
$(\tilde Y_n)_{n\ge0}$ be the conditioned walk induced by $\tilde P$ with $\tilde Y_0 = 1$. Then
the transition probabilities of $(\tilde Y_n)_{n\ge0}$ under $\tilde P$ are given by the Doob $h$-transform using the harmonic function $h(j)$ defined in \eqref{eq:hj}. For $2\le j\le k-1$ and $n\ge0$, we have
\begin{align*}
&\tilde P(j,j+1):=\tilde P\left(\tilde Y_{n+1}=j+1\,\middle|\,\tilde Y_n=j\right)=\frac{p\,h(j+1)}{h(j)},\\
&\tilde P(j,j-1):=\tilde P\left(\tilde Y_{n+1}=j-1\,\middle|\,\tilde Y_n=j\right)=\frac{q\,h(j-1)}{h(j)},
\end{align*}
and for $j=1,$ since $h(0)=0$ we have $$\tilde P(1,2):=\tilde P\left(\tilde Y_{n+1}=2\,\middle|\,\tilde Y_n=1\right)=1,\ \  \tilde P(1,0):=\tilde P\left(\tilde Y_{n+1}=0\,\middle|\,\tilde Y_n=1\right)=0.$$ Note that $\tilde P(j,j+1) > p$ for all $1\le j\le k-1$ since $h$ is strictly increasing.

Let $\tau_1=0.$
Define for $j=2,\dots,k$ the hitting time
\[
\tau_j = \min\{n\ge 0: \tilde Y_n = j\},
\] and set
$\xi_j=\tau_j-\tau_{j-1}.$
Then, by the strong Markov property, the random variables $\xi_2,\dots,\xi_k$ are independent. Moreover, $H_k = \sum_{j=2}^k \xi_j.$

Next we show that for some $c_4$ depending only on $\lambda,$
\begin{align}\label{eq:yj12}
  E[\xi_j]\le c_4, \quad E[\xi_j^2]\le c_4, \quad j\ge 2.
\end{align}
To this end, we couple the condition walk with a simple random walk on $\mathbb Z.$

Fix $2\le j\le k-1.$
Suppose that $(U_n)_{n\ge 1}$ is a sequence of independent uniform $[0,1]$ random variables.
Set $Z_0=\tilde Z_0=j-1.$ For $n\ge 1,$ define recursively
\[
Z_n = \begin{cases}
Z_{n-1}+1, & \text{if } U_n < p,\\
Z_{n-1}-1, & \text{if } U_n \ge p,
\end{cases} \quad \tilde Z_n = \begin{cases}
\tilde Z_{n-1}+1, & \text{if } U_n < \tilde p_{\tilde Z_{n-1}},\\
\tilde Z_{n-1}-1, & \text{if } U_n \ge \tilde p_{\tilde Z_{n-1}},\end{cases}
\]
where
$$\tilde p_i = \begin{cases}\tilde P(i,i+1),& 1\le i\le k-1,\\
p, &i\ge k\text{ or }i\le 0.\end{cases}$$ Since $\tilde p_i \ge p$ for every $i\in \mathbb Z$, a simple induction shows that $\tilde Z_n \ge Z_n$ for all $n$ almost surely. Indeed, the claim holds for $n=0$. For all $n\ge1,$ assume $\tilde Z_{n-1} \ge Z_{n-1}$. Suppose the event $\{U_n < p\},$ the event $\{U_n < p_{\tilde Z_{n-1}}\}$  occurs as well. Thus then $\tilde Z_n = \tilde Z_{n-1}+1\ge Z_{n-1}+1= Z_n.$ Otherwise, suppose the event $\{U_n \ge p\}$ occurs, then $Z_n = Z_{n-1}-1.$ In this case, regardless of whether the event $\{U_n < \tilde p_{\tilde X_{n-1}}\}$ occurs or not, we have $\tilde Z_n \ge \tilde Z_{n-1}-1 \ge Z_{n-1}-1 = Z_n.$

Clearly
 $(Z_n)_{n\ge 0}$ is a simple random walk on $\mathbb{Z}$ with $Z_0 = j-1$ and transition probabilities
\[
P(Z_{n+1}=i+1\mid Z_n=i)=p,\qquad P(Z_{n+1}=i-1\mid Z_n=i)=q.
\]
It is also easy to check that $(\tilde Z_n)_{n\ge0}$ is a random walk starting from $j-1$ and shares the same transition probabilities with $(\tilde Y_n)_{n\ge 0}$ before leaving the set $\{1,...,k\}.$

Let $$\tilde \xi_j=\inf\{n> 0:\tilde Z_n=j\},\quad \eta_j=\inf\{n>0:Z_n=j\}.$$ Then we have $\tilde \xi_j\le \eta_j $ almost surely and  $\tilde \xi_j\overset{d}{=}\xi_j.$

For the simple random walk $(Z_n)_{n\ge0}$, it is well known that
\[
E[\eta_j] = \frac{1}{p-q} = \frac{1+\lambda}{1-\lambda},
\]
and the second moment is finite and given by
\[
E[\eta_j^2] = \frac{1}{p-q} + \frac{4q}{(p-q)^2} + \frac{2q}{(p-q)^3}
= \frac{1+\lambda}{1-\lambda} + \frac{4\lambda(1+\lambda)}{(1-\lambda)^2} + \frac{2\lambda(1+\lambda)^2}{(1-\lambda)^3}:=c_4.
\]
Therefore we obtain
\[
E[\xi_j]=E[\tilde \xi_j] \le E[\eta_j] <c_4,\qquad E[\xi_j^2] \le E[\eta_j^2]=c_4,
\]
which gives \eqref{eq:yj12}.

We have shown that
the random variables $\xi_2,\dots,\xi_k$ are independent under $\tilde P$ and have uniformly bounded second moments. By Kolmogorov's strong law of large numbers for independent random variables (see, e.g., Theorem 2 in \S3, Chapter IV of Shiryaev \cite{s}), we have
\[
\frac{1}{k-1}\sum_{j=2}^k (\xi_j - E[\xi_j]) \to 0, \quad\text{a.s.,}
\] as $k\to\infty.$
Since $E[\xi_j] \le c_4$,
\[
\frac{1}{k-1}\sum_{j=2}^k \xi_j \le c_4 + o(1) \quad \text{a.s.},
\]
which implies $H_k = \sum_{j=2}^k \xi_j = O(k)$ almost surely.  This completes the proof. \qed
\end{proof}

\noindent{\it Proof of Theorem \ref{cor:limsup}: Case $0<\lambda<1$.} To begin with, we derive the upper limit.
Since $X_n$ lies in the bush of some backbone vertex visited by time $n$, we have $|X_n| \le 2N_n$, and consequently
\begin{align}\label{eq:ubnxn}
\limsup_{n\to\infty} \frac{|X_n|}{\log n} \le \frac{2}{\log(1/\lambda)}, \quad \text{a.s..}
\end{align}
Next we derive the lower bound.
For each $k\ge1$, denote by $A_k$ the event that upon first arriving at $(k,0)$, the walk steps into the bush and then reaches the leaf $(k,k)$ before ever returning to $(k,0)$. From $(k,0)$, the probability to step into the bush is $\frac{1}{2+\lambda}$. Starting from  $(k,1)$, by \eqref{eq:gam}, the probability to hit $(k,k)$ before returning to $(k,0)$ is  $\frac{1-\lambda}{1-\lambda^k}$. Hence
\[
P(A_k) = \frac{1}{2+\lambda} \cdot \frac{1-\lambda}{1-\lambda^k} \xrightarrow[k\to\infty]{} \frac{1-\lambda}{2+\lambda} > 0.
\]
Thus $\sum_{k=1}^\infty P(A_k)=\infty.$
 Clearly, the events $A_k,$ $k\ge1$ are independent. Therefore, by the second Borel-Cantelli lemma, $A_k$ occurs for infinitely many $k$ almost surely.

When $A_k$ occurs, let $m_k$ be the first time the walk hits the leaf $(k,k)$. Then $|X_{m_k}| = 2k$, and because the walk has not yet reached $(k+1,0)$, we have $N_{m_k} = k$. By Lemma~\ref{lem:bush_time}, $m_k -  S_k = O(k)$ almost surely. Therefore,
\[
\log m_k = \log ( S_k + O(k)) = \log  S_k + \log\left(1 + \frac{O(k)}{ S_k}\right),\quad\text{a.s..}
\]
From Lemma~\ref{lem:logSn}, we know that $\log  S_k \sim k \log(1/\lambda)$ almost surely as $k\to\infty$.
Thus, for $k$ large enough, we have $ S_k\ge \lambda^{-k/2}$ almost surely.
Hence, we obtain $$\log m_k \sim \log  S_k \sim k \log(1/\lambda), \text{ a.s.,}$$ as $k\to\infty.$ Consequently,
\[
\frac{|X_{m_k}|}{\log m_k} = \frac{2k}{\log m_k} \to \frac{2}{\log(1/\lambda)}, \quad \text{a.s.,}
\]as $k\to\infty.$
Hence $\limsup_{n\to\infty} |X_n|/\log n \ge 2/\log(1/\lambda)$ a.s.. Combined with the upper bound we get in \eqref{eq:ubnxn}, we obtain \begin{align*}
\limsup_{n\to\infty} \frac{|X_n|}{\log n} = \frac{2}{\log(1/\lambda)}, \quad \text{a.s..}
\end{align*}

For the next step, we show the lower limit.
By Proposition~\ref{thm:logscale},
\begin{align}
\frac{N_n}{\log n}\to\frac{1}{\log(1/\lambda)},\quad\text{a.s.,} \label{eq:Nn_as}
\end{align}
as $n\to\infty.$
Notice that for $k\ge0,$ $X_{ S_k}=(k,0)$, so $|X_{ S_k}|=k$ and $N_{ S_k}=k$.  Lemma~\ref{lem:logSn} gives $\log S_k\sim k \log(1/\lambda)$ almost surely, hence
\[
\lim_{k\to\infty}\frac{|X_{ S_k}|}{\log S_k}=\frac{1}{\log(1/\lambda)},\quad\text{a.s..}
\]
Therefore, we have
\begin{align}
\liminf_{n\to\infty}\frac{|X_n|}{\log n}\le\frac1{\log(1/\lambda)}, \quad\text{a.s..} \label{eq:upper}
\end{align}

We now prove the opposite inequality.  For each $k\ge1$, consider the walk starting from $(k,0)$.  Ignore the time spent in bushes and consider only the embedded backbone moves.  Each such move increases the backbone index by $1$ with probability $p=\frac{1}{1+\lambda}$ and decreases it by $1$ with probability $q=\frac{\lambda}{1+\lambda}$.  Define $D_k$ as the maximal decrease of the backbone coordinate before the walk first reaches $(k+1,0)$.  In other words, if $Z_0=k$, $Z_1,Z_2,\dots$ are the successive backbone positions (ignoring stays), and $T=\min\{m\ge1:Z_m=k+1\}$, then
\[
D_k = \max_{0\le t\le T}(k-Z_t).
\]
The event $\{D_k\ge d\}$ means that starting from $k$, the walk reaches $k-d$ before reaching $k+1$.  From \eqref{eq:gam}, we get
\begin{align*}
\mathbb{P}(D_k\ge d)=\frac{(q/p)^d-(q/p)^{d+1}}{1-(q/p)^{d+1}} = \lambda^d\,\frac{1-\lambda}{1-\lambda^{d+1}} \le \lambda^d. 
\end{align*}
Fix $c>1/\log(1/\lambda)$.  We have
\[
\sum_{k=1}^{\infty}\mathbb{P}\bigl(D_k > c\log k\bigr) \le \sum_{k=1}^{\infty} \lambda^{c\log k} = \sum_{k=1}^{\infty} k^{-c \log(1/\lambda)} < \infty.
\]
By the Borel--Cantelli lemma, almost surely $D_k \le c\log k$ for all sufficiently large $k$.  Consequently,
\[
\limsup_{k\to\infty}\frac{D_k}{\log k} \le c,\quad\text{a.s.}.
\]
Since $c>1/\log(1/\lambda)$ is arbitrary, we infer that
\begin{align*}
\limsup_{k\to\infty}\frac{D_k}{\log k} = \frac1{\log(1/\lambda)},\quad\text{a.s.}. 
\end{align*}
Thus taking \eqref{eq:Nn_as} into account, we  have
\begin{align}
\limsup_{n\to\infty}\frac{D_{N_n}}{\log n} = \limsup_{n\to\infty} \frac{D_{N_n}}{\log N_n}\cdot\frac{\log N_n}{\log n} \le \frac{1}{\log(1/\lambda)}\cdot 0=0, \quad \text{a.s..} \label{eq:Dn_log}
\end{align}
Since $S_{N_n}\le n<S_{N_n+1},$ we have
\begin{align}
|X_n| \ge N_n - D_{N_n}\quad\text{and}\quad N_n\ge X_{n,1}\ge N_n - D_{N_n},\quad \text{a.s.}. \label{eq:X_lower}
\end{align}
Dividing \eqref{eq:X_lower} by $\log n$ and using \eqref{eq:Nn_as} together with \eqref{eq:Dn_log} yields
\begin{align}
\liminf_{n\to\infty}\frac{|X_n|}{\log n} \ge \frac1{\log(1/\lambda)}\quad\text{and}\quad\lim_{n\to\infty}\frac{X_{n,1}}{\log n} =\frac1{\log(1/\lambda)},\quad\text{a.s.}. \label{eq:lower}
\end{align}
Taking \eqref{eq:upper} and \eqref{eq:lower} together, conclude that
\[
\liminf_{n\to\infty}\frac{|X_n|}{\log n} =\lim_{n\to\infty}\frac{X_{n,1}}{\log n} = \frac{1}{\log(1/\lambda)},\qquad\text{a.s.},
\]
which completes the proof of \eqref{eq:limb} and the lower limit in \eqref{eq:lslim}. \qed

\subsection{Proof of Theorem \ref{cor:limsup}: Case $\lambda>1$  }\label{sec:43}

\begin{proof}
Suppose $\lambda>1.$
Since $S_{N_n}\le n<S_{N_n+1},$ we have $X_{n,1}\le N_n.$ Thus, from Proposition~\ref{thm:logscale}, we get
\[
\limsup_{n \to \infty} \frac{X_{n,1}}{\log n} \leq \limsup_{n \to \infty} \frac{N_n}{\log n}= \frac{1}{\log \lambda}, \quad \text{a.s.}.
\]

For the reverse inequality, let $S_k$ be the first hitting time of the backbone vertex $(k,0)$. At time $S_k$, we have $X_{S_k,1} = k$. By Lemma~\ref{lem:logSn},
$
\frac{\log S_k}{k} \to \log \lambda$ almost surely as $k \to \infty,
$
so
\[
\frac{X_{S_k,1}}{\log S_k} = \frac{k}{\log S_k} \to \frac{1}{\log \lambda}, \quad \text{a.s.},
\]
as $k\to\infty.$
This implies
\[
\limsup_{n \to \infty} \frac{X_{n,1}}{\log n} \geq \frac{1}{\log \lambda}, \quad \text{a.s.}.
\]
Therefore, we conclude that
$$
\limsup_{n \to \infty} \frac{X_{n,1}}{\log n}= \frac{1}{\log \lambda},\quad\text{a.s..}$$\qed
\end{proof}

\section{Cutpoints and cut times: proof of Theorem \ref{thm:cutpoints}}\label{sec:E}

In this section, we consider the number of cutpoints and cut times, establishing the proof of Theorem \ref{thm:cutpoints}.

\proof
In this proof, we denote by $C$ the collection of all cutpoints of $(X_n)_{n\ge0}.$

From Definition \ref{def:cut}, we see that all bush vertices are not cutpoints and for $n\ge 1,$
 the backbone vertex $(n,0)$ is a cutpoint if and only if after its first visit, the walk never returns to $(n-1,0)$. Therefore, for $n\ge1,$
using the notation of \eqref{eq:pmainf}, we have
\begin{equation}\label{eq:cutprob}
P((n,0)\in C) = P_{n}(n-1,\infty,+) = 1 - \lambda.
\end{equation}

Define $\eta_i=\mathbf{1}_{\{(i,0)\in C\}}$ for $i\ge1$.  For $k\ge 2,$ we now consider indices $1\le i_1<i_2<\dots<i_k$. Using the strong Markov property, in view of \eqref{eq:pmab} and \eqref{eq:cutprob}, we have
\begin{align}\label{eq:etaj}
P(\eta_{i_1}&=1,\dots,\eta_{i_k}=1)=P((i_1,0)\in C,\dots,(i_k,0)\in C)\nonumber\\
&=P_{i_1}(i_1-1,i_2,+)\,P_{i_2}(i_2-1,i_3,+)\cdots P_{i_k}(i_k-1,\infty,+)\nonumber\\
&=(1-\lambda)^k\prod_{s=1}^{k-1}\frac{1}{1-\lambda^{i_{s+1}-i_s+1}},
\end{align}
which depends only on the gaps $i_{s+1}-i_s,$ $1\le s\le k.$ Hence the sequence $\{\eta_i\}_{i\ge1}$ is strictly stationary.

Next we prove that $\frac1n\sum_{i=1}^n\eta_i$ converges almost surely to $1-\lambda$ as $n\to\infty$. Since the sequence $\{\eta_i\}_{i\ge1}$ is strictly stationary, by the Birkhoff-Khinchin ergodic theorem (see e.g. Theorem 1 in \S3, Chap. V of Shiryaev \cite{s}),  the limit
\[
Y=\lim_{n\to\infty}\frac1n\sum_{i=1}^n\eta_i
\]
exists almost surely. To identify the limit, we show that the sequence is mean-ergodic, i.e., it converges in $L^2$ to the constant $E[\eta_1]$. To this end, notice that by \eqref{eq:cutprob} and \eqref{eq:etaj}, for $i
\ge 1, \ j\ge 1$ with $ i\ne j,$ we have
\begin{align*}
&\operatorname{Var}(\eta_i)=\lambda(1-\lambda),\\
&\operatorname{Cov}(\eta_i,\eta_j)=(1-\lambda)^2\left(\frac{1}{1-\lambda^{|j-i|+1}}-1\right)
=(1-\lambda)^2\frac{\lambda^{|j-i|+1}}{1-\lambda^{|j-i|+1}}\le \lambda^{|j-i|+1}.
\end{align*}
 Hence
\begin{align*}
&E\left[\left(\frac1n\sum_{i=1}^n\eta_i-E[\eta_1]\right)^2\right]
=\frac{1}{n^2}\sum_{i=1}^n\sum_{j=1}^n\operatorname{Cov}(\eta_i,\eta_j)\\
&\qquad=\frac{1}{n^2}\sum_{i=1}^n\operatorname{Var}(\eta_i)+\frac{2}{n^2}\sum_{i=1}^{n}\sum_{j=i+1}^n\operatorname{Cov}(\eta_i,\eta_j)\\
&\qquad\le \frac{n\lambda(1-\lambda)}{n^2}+\frac{2(1-\lambda)}{n^2}\sum_{i=1}^{n}\sum_{j=i+1}^n\lambda^{j-i+1}\\
&\qquad\to 0,
\end{align*}
as $n\to\infty.$
Thus $\frac1n\sum_{i=1}^n\eta_i$ converges to $E[\eta_1]=1-\lambda$ in $L^2$ as $n\to\infty$. Since the $L^2$ limit is a constant, we conclude $Y=1-\lambda$ almost surely. Therefore,
\begin{align}\label{eq:limcn}
\lim_{n\to\infty}\frac{C(n)}{n}=\lim_{n\to\infty}\frac1n\sum_{i=1}^n\eta_i=1-\lambda, \quad\text{a.s..}
\end{align}
The first part of Theorem~\ref{thm:cutpoints} is proved.

Now we turn to the second part of Theorem~\ref{thm:cutpoints}. Recall that $M(N)$ is the number of cut times up to time $N$. Let $n_N = \max\{n\ge 1 : S_n \le N\}$ be the number of distinct backbone vertices visited by time $N$ (with $n_N=0$ if $S_1>N$). Then it is clear that
\[
M(N) = \sum_{i=1}^{n_N} \mathbf{1}_{\{(i,0)\in C\}}.
\]
 Notice that by  Proposition~\ref{thm:logscale},
\[
\lim_{N\to\infty}\frac{n_N}{\log N}=\frac{1}{\log(1/\lambda)},\quad\text{a.s.}.
\]
Moreover, it follows from \eqref{eq:limcn} that
\[
\frac{1}{n_N}\sum_{i=1}^{n_N}\mathbf{1}_{\{(i,0)\in C\}}\to 1-\lambda,\quad\text{a.s.,}
\] as $N\to\infty.$
Therefore,
\[
\frac{M(N)}{\log N}=\frac{1}{n_N}\sum_{i=1}^{n_N}\mathbf{1}_{\{(i,0)\in C\}}\cdot\frac{n_N}{\log N}
\to \frac{1-\lambda}{\log(1/\lambda)},\quad\text{a.s.},
\]
as $N\to\infty.$ This completes the proof of Theorem~\ref{thm:cutpoints}. \qed
\vspace{.5cm}

\noindent{{\bf \Large Acknowledgements:}} The authors would like to thank Professor W. M. Hong for some useful suggestions and comments.

\end{document}